\documentclass[12pt]{amsart} 

\usepackage[T1]{fontenc}  
\usepackage{lmodern}
\usepackage{microtype}
\usepackage{amsmath,amssymb,mathtools} 
\usepackage{silence}
\usepackage{enumitem}
\usepackage[margin=1in]{geometry}
\usepackage{xcolor}
\usepackage[colorlinks=true,linkcolor=blue,citecolor=blue,urlcolor=blue]{hyperref}
\hypersetup{
  pdftitle={Chevalley-Eilenberg cohomology of linearly reductive Lie algebras in the positive Verlinde category},
  pdfauthor={Pavel Etingof and Serina Hu}
}

\allowdisplaybreaks
\newtheorem{theorem}{Theorem}[section]
\newtheorem{proposition}[theorem]{Proposition}
\newtheorem{lemma}[theorem]{Lemma}
\newtheorem{corollary}[theorem]{Corollary}

\newtheorem{definition}[theorem]{Definition}

\theoremstyle{remark}
\newtheorem{remark}[theorem]{Remark}

\DeclareMathOperator{\Ver}{Ver}
\DeclareMathOperator{\Rep}{Rep}

\DeclareMathOperator{\Hom}{Hom}
\DeclareMathOperator{\Ext}{Ext}
\DeclareMathOperator{\gr}{gr}

\DeclareMathOperator{\Lie}{Lie}

\newcommand{\kk}{\Bbbk}
\newcommand{\cC}{\mathcal C}
\newcommand{\g}{\mathfrak g}
\newcommand{\one}{\mathbf 1}
\newcommand{\CE}{\mathrm{CE}}
\newcommand{\dR}{\mathrm{dR}}
\newcommand{\uHom}{\underline{\Hom}}

\definecolor{insertpurple}{RGB}{112,48,160}

\newcommand{\cK}{\mathcal K}
\newcommand{\cQ}{\mathcal Q}

\title[Lie algebra cohomology in $\Ver_p^+$]
{Chevalley-Eilenberg cohomology of linearly reductive Lie algebras in the Verlinde category}

\author{Pavel Etingof}
\address{Department of Mathematics, Massachusetts Institute of Technology, Cambridge, MA 02139, USA}
\email{etingof@math.mit.edu}

\author{Serina Hu}
\address{Department of Mathematics, Massachusetts Institute of Technology, Cambridge, MA 02139, USA}
\email{serinahu@mit.edu}

\date{}

\begin{document}

\begin{abstract}
Let $\kk$ be an algebraically closed field of characteristic $p\geq5$. Let
$\Ver_p$ be the Verlinde fusion category over $\kk$, i.e., the semisimplification of $\Rep_\kk(\mathbb Z/p)$, with simple objects 
$L_1=\one,L_2,\dots,L_{p-1}$. 
Let $\Ver_p^+$ be the even part of $\Ver_p$, with simple objects $L_1,L_3,\dots,L_{p-2}$. Let $\g$ be a linearly reductive Lie algebra in 
$\Ver_p^+$, i.e., one whose finite-dimensional representations are semisimple. For example, $\g$ can be the image under semisimplification of a usual simple Lie algebra $\g_\kk$ over $\kk$ with action of $\mathbb Z/p$ by a principal (=regular) unipotent element when $p$ exceeds the Coxeter number of $\g_\kk$. 

We first prove that $\g$ is invariantless, i.e., the unit object of $\Ver_p^+$ is not a 
summand of $\g$. Next, for
$3\leq m\leq p-2$, $m$ odd, set
$\g_m:=\Hom_{\Ver_p^+}(L_m,\g)$ and define the graded vector space
$ E_{\g}:=\bigoplus_{\substack{3\leq m\leq p-2\\m\text{ odd}}}
 \g_m^{(1)}[m]$,
where $(1)$ denotes Frobenius twist and $[m]$ the grading; 
thus $E_{\g}^*=\bigoplus_{\substack{3\leq m\leq p-2\\m\text{ odd}}}
 \g_m^{(1)*}[m]$. Our main result is that the Chevalley-Eilenberg cohomology $H^\bullet_{\CE}(\g)$ 
of $\g$ with trivial coefficients is isomorphic to $\wedge^\bullet E_{\g}^*$ as a graded algebra.
We also show that the algebra $H^\bullet_{\CE}(\g)$ is naturally isomorphic to the 
de Rham cohomology $H^\bullet_{\dR}(G)$ of the group scheme $G:=\exp(\g)$
and hence has a natural graded Hopf algebra structure given by multiplication in $G$, and prove that it corresponds to the usual graded Hopf algebra structure on $\wedge^\bullet E_{\g}^*$. 

Moreover, if $V$ is a simple $\g$-module on which $\g$ acts nontrivially, then
$H^\bullet_{\CE}(\g,V)=0$. Thus for every finite
$\g$-module $V$, 
$H^\bullet_{\CE}(\g,V)\cong\wedge^\bullet E_{\g}^*\otimes V^{\g}$ as a $H^\bullet_{\CE}(\g)\cong  
\wedge^\bullet E_{\g}^*$-module.  This implies the classical theorem of Borel and Chevalley on the cohomology of complex semisimple Lie algebras, as well as its analogue in sufficiently large positive characteristic.

The proof uses the vanishing of Exts in the category $\Rep(\g)$, which follows from its semisimplicity, together with the almost Koszul property of the 
object $\g\in \Ver_p^+$. 
\end{abstract}

\maketitle

\tableofcontents

\section{Introduction}

For a complex semisimple Lie algebra $\g_{\mathbb C}={\rm Lie}(G_{\Bbb C})$ with exponents
$m_1,\dots,m_r$, a classical theorem of Borel and Chevalley gives an isomorphism of graded Hopf algebras
\[
 H^\bullet(\g_{\mathbb C},\mathbb C)
 \cong \wedge^\bullet(\xi_1,\dots,\xi_r),
 \qquad |\xi_i|=2m_i+1,
\]
where the coproduct on $H^\bullet(\g_{\mathbb C},\mathbb C)$ is 
defined, upon its identification with the de Rham cohomology 
$H^\bullet_{\rm dR}(G_{\Bbb C},\Bbb C)$, by the multiplication in $G_{\Bbb C}$; see \cite{Ree95} for a modern proof. Also for every nontrivial finite-dimensional irreducible $\g_{\mathbb C}$-module $V$ one has $H^\bullet(\g_{\mathbb C},V)=0$.
The purpose of this paper is to establish the corresponding statements for
linearly reductive Lie algebras in the Verlinde category $\Ver_p^+$, i.e., those whose finite-dimensional representations are completely reducible. Such 
Lie algebras are discussed in \cite[Section~4]{CEO24}; for example, they arise as images under semisimplification of usual simple Lie algebras $\g_\kk$ over a field $\kk$ of characteristic $p$ with action of $\mathbb Z/p$ by a principal unipotent element, when $p$ exceeds the Coxeter number of $\g_\kk$. 

Namely, our results are as follows. 
Let $\kk$ be an algebraically closed field of characteristic $p\geq5$. Let
$\Ver_p$ be the Verlinde fusion category over $\kk$, i.e., the semisimplification of $\Rep_\kk(\mathbb Z/p)$, with simple objects 
$L_1=\one,L_2,\dots,L_{p-1}$. 
Let $\Ver_p^+$ be the even part of $\Ver_p$, with simple objects $L_1,L_3,\dots,L_{p-2}$. Let $\g$ be a linearly reductive 
Lie algebra in $\Ver_p^+$. 

We first show that $\g$ is {\it invariantless}, i.e., the classical part
$\g_1:=\Hom_{\Ver_p^+}(\one,\g)$ of $\g$ vanishes (Proposition~\ref{prop:LR-invariantless}). The proof uses the relative PBW decomposition which
makes $U(\g)$ a finite free right module over $U(\g_1)$. Shapiro's lemma then
transfers the vanishing of Ext in $\Rep(\g)$ to $\Rep(\g_1)$.  Since the category of ordinary finite-dimensional modules over
a nonzero Lie algebra in characteristic $p$ is never semisimple, one obtains
$\g_1=0$. It follows that $U(\g)$ is a finite algebra object in $\Ver_p^+$.

We then proceed to study the (co)homology of $\g$. 
For a $\g$-module $V$, let 
$C^\bullet_{\CE}(\g,V)=\wedge^\bullet \g^*\otimes V$ be the 
standard complex of $\g$ with coefficients in $V$ with the usual Chevalley-Eilenberg differential, also called the {\it Chevalley-Eilenberg complex}. 
If $V=\one$, we denote it just by $C^\bullet_{\CE}(\g)$. 
The cohomology of this complex is called the {\it Chevalley-Eilenberg cohomology} of $\g$ with coefficients in $V$ and denoted $H^\bullet_{\CE}(\g,V)$. 
In particular, we denote $H^\bullet_\CE(\g,\one)$ just by $H^\bullet_\CE(\g)$. 
We also write
\[
 C_\bullet^{\CE}(\g):=\wedge^\bullet\g,
 \qquad H_\bullet^{\CE}(\g):=H_\bullet(C_\bullet^{\CE}(\g))
\]
for the Chevalley--Eilenberg chain complex and homology of $\g$ with trivial
coefficients.

Equivalently, $C^n_{\CE}(\g,V)=\uHom(\wedge^n\g,V)$.  The differential is
given by the usual schematic formula
\[
\begin{aligned}
(d_{\CE}f)(x_0,\ldots,x_n)
 &=\sum_{i=0}^n(-1)^i x_i\cdot
 f(x_0,\ldots,\widehat{x_i},\ldots,x_n)\\
 &\quad+\sum_{0\leq i<j\leq n}(-1)^{i+j}
 f([x_i,x_j],x_0,\ldots,\widehat{x_i},\ldots,
 \widehat{x_j},\ldots,x_n),
\end{aligned}
\]
where the arguments not carrying hats retain their order; for trivial
coefficients the first sum is absent.  Dually, the boundary map on chains is
\[
 \partial_{\CE}(x_1\wedge\cdots\wedge x_n)
 =\sum_{1\leq i<j\leq n}(-1)^{i+j}[x_i,x_j]\wedge
 x_1\wedge\cdots\wedge\widehat{x_i}\wedge\cdots\wedge
 \widehat{x_j}\wedge\cdots\wedge x_n.
\]
Here and below, by a schematic formula we mean an element-wise formula 
in a symmetric tensor category $\cC$ which is literally meaningless if objects of $\cC$ are not sets, but 
has an obvious translation to the categorical language and acquires precise meaning over commutative algebras in $\cC$ in terms of Grothendieck's functor of points. We use such formulas because, while somewhat informal, they are easier to read than their 
precise categorical counterparts.

Proposition~\ref{prop:dR-CE} shows, as in the complex case, that
$H^\bullet_{\CE}(\g)\cong H^\bullet_{\rm dR}(G)$, where $G=\exp(\g)$
is the height-one linearly reductive affine group scheme corresponding to $\g$.
This endows $H^\bullet_{\CE}(\g)$ with a natural Hopf algebra structure, whose
coproduct comes from multiplication in $G$.
 
Next, we prove our main result. Let $M^{(1)}$ denote the Frobenius twist of a finite-dimensional vector space $M$: scalar multiplication by $\lambda$ on $M$ induces multiplication by $\lambda^p$ on $M^{(1)}$.

\begin{theorem}[Main theorem]\label{thm:main-intro}
Let
\[
 \g=\bigoplus_{\substack{3\leq m\leq p-2\\m\text{ odd}}}
 \g_m\otimes L_m
\]
be a linearly reductive Lie algebra in $\Ver_p^+$, where
$\g_m:=\Hom_{\Ver_p^+}(L_m,\g)$, and set
\[
 E_{\g}:=
 \bigoplus_{\substack{3\leq m\leq p-2\\m\text{ odd}}}
 \g_m^{(1)}[m].
\]
Then:
\begin{enumerate}[label=\textup{(\roman*)},leftmargin=2.3em]
\item there is a canonical isomorphism of graded Hopf algebras\footnote{Here and below we identify a finite-dimensional $\kk$-vector space $Y$ 
 with the object $Y\otimes \one$ of a tensor category. Also we agree that the (co)homological degree is unchanged under taking duals.}
\[
 H^\bullet_{\CE}(\g)\cong\wedge^\bullet E_{\g}^*.
\]
Dually, there is a canonical isomorphism of graded Hopf algebras
\[
 H_\bullet^{\CE}(\g)\cong\wedge^\bullet E_\g.
\]
Moreover, the natural maps from invariant cochains and invariant chains induce
isomorphisms
\[
 (\wedge^\bullet\g^*)^\g\xrightarrow{\ \sim\ }H^\bullet_{\CE}(\g),
 \qquad
 (\wedge^\bullet\g)^\g\xrightarrow{\ \sim\ }H_\bullet^{\CE}(\g).
\]
Thus the cohomology and homology isomorphisms above are dual under the natural
pairing.

\item If $V$ is a simple $\g$-module with nonzero $\g$-action, then
\[
 H^\bullet_{\CE}(\g,V)=0.
\]
\end{enumerate}
Thus for every finite $\g$-module $V$ there is a natural
isomorphism
\[
 H^\bullet_{\CE}(\g,V)
 \cong \wedge^\bullet E_{\g}^*\otimes V^{\g}
\]
of graded modules over $H^\bullet_{\CE}(\g)\cong \wedge^\bullet E_\g^*$.  In particular,
\[
 \sum_{n\geq0}\dim_{\kk}H^n_{\CE}(\g)t^n
 =\prod_{\substack{3\leq m\leq p-2\\m\text{ odd}}}
 (1+t^m)^{\dim\g_m}.
\]
\end{theorem}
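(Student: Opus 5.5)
The plan is to reduce everything to the structure of $\wedge^\bullet\g$ as an object of $\Ver_p^+$, using the semisimplicity of $\Rep(\g)$ to split the Chevalley--Eilenberg complex. First, since $\g$ is invariantless (Proposition~\ref{prop:LR-invariantless}), $U(\g)$ is a finite algebra in $\Ver_p^+$ and $\Rep(\g)$ is a semisimple finite category. The complex $C^\bullet_{\CE}(\g,V)$ computes $\Ext^\bullet_{\Rep(\g)}(\one,V)$ only if the Chevalley--Eilenberg resolution $U(\g)\otimes\wedge^\bullet\g\to\one$ is exact. That is exactly what fails in $\Ver_p$: the Koszul complex of $\g$ is not acyclic. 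So I would not use it directly. Instead I would use the adjoint action. $\g$ acts on $C^\bullet_{\CE}(\g,V)$ by the Lie derivative $L_x$, and Cartan's formula $L_x=d\iota_x+\iota_x d$ holds schematically in any symmetric tensor category. By semisimplicity, $C^\bullet_{\CE}(\g,V)$ decomposes as a complex of $\g$-modules into isotypic components. On each nontrivial isotypic component, a Casimir-type element argument shows the component is acyclic. Concretely, $\g$ acts trivially on cohomology by Cartan's formula, but a nontrivial simple has no trivial quotient, so the component is contractible. Thus $H^\bullet_{\CE}(\g,V)=H^\bullet\big((\wedge^\bullet\g^*\otimes V)^\g\big)$.

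Second, on invariant cochains the differential vanishes for trivial coefficients. The usual argument writes $d=\tfrac12\sum \xi_i\wedge L_{x_i}$ (plus the coefficient term), which is zero on $\g$-invariants. This gives $(\wedge^\bullet\g^*)^\g\cong H^\bullet_{\CE}(\g)$, and dually for chains. For part (ii), with $V$ simple and nontrivial, the invariant complex is $\Hom_\g(V^*,\wedge^\bullet\g^*)$. Here the differential on invariants reduces to the coefficient action term, $d=\sum\xi_i\otimes x_i$. I would show this is exact using the Casimir of $V$ acting invertibly, or rather a homotopy built from $\iota$ and the $\g$-action on $V$; I expect semisimplicity to supply an invariant form for this. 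The isomorphism $H^\bullet_{\CE}(\g,V)\cong\wedge^\bullet E_\g^*\otimes V^\g$ then follows by decomposing $V$.

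Third, and hardest, is computing $(\wedge^\bullet\g^*)^\g$ and identifying it with $\wedge^\bullet E_\g^*$. I would pass through the identification $H^\bullet_{\CE}(\g)\cong H^\bullet_{\dR}(G)$ (Proposition~\ref{prop:dR-CE}). This makes $H^\bullet$ a graded connected commutative Hopf algebra, so by Hopf--Borel it is an exterior algebra on its odd primitives once we know the Poincaré series. For the dimension count I would use the almost Koszul property of $\g\in\Ver_p^+$. For each $L_m$, the Koszul complex $S^\bullet L_m\otimes\wedge^\bullet L_m$ is exact except for a contribution from the Frobenius twist $L_m^{(1)}$ sitting in degree $m$ (the "almost" defect). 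This gives $\Ext_{\Rep(\g)}$-type information: the cohomology of $U(\g)\otimes\wedge^\bullet\g$ is $\wedge^\bullet E_\g$ tensored with a trivial module. Combined with vanishing of higher Ext in $\Rep(\g)$, a spectral sequence collapses and identifies $H_\bullet^{\CE}(\g)$ with $\wedge^\bullet E_\g$. The degree-$m$ primitive generators correspond to $\g_m^{(1)}$ via the Frobenius twist map. The main obstacle is making this spectral sequence argument precise, in particular checking that the almost-Koszul defect is exactly $\g_m^{(1)}[m]$ and is $\g$-trivial. Once the Poincaré series matches, the Hopf structure is forced by Hopf--Borel, and canonicity comes from the primitive generators.
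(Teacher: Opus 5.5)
\textbf{There is a genuine gap: the computation of $H_\bullet(\cQ_\bullet(\g))$ is asserted rather than proved.}

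Your first two steps are correct, and the paper uses exactly these facts at the end of its proof. They are the reduction to invariant cochains (Cartan homotopy plus semisimplicity) and the vanishing of the differential on $(\wedge^\bullet\g^*)^\g$. The gap is in your third step, which carries the content of the theorem. There you assert that $H_\bullet(U(\g)\otimes\wedge^\bullet\g)\cong\wedge^\bullet E_\g$ with trivial $\g$-action, but you leave this as the ``main obstacle.''

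The almost-Koszul computation only describes the homology of the associated graded of the PBW filtration on $\cQ_\bullet(\g)$, namely the Koszul complex $\cK_\bullet(\g)$. (Per copy of $L_m$, the defect is a copy of $\one$ in homological degree $m$ and diagonal degree $p$, with multiplicity space $\g_m^{(1)}$.) A priori the higher differentials of the resulting spectral sequence could kill classes. So by itself this only shows $H_\bullet(\cQ_\bullet(\g))$ is a subquotient of $\wedge^\bullet E_\g$, i.e.\ an upper bound on the Poincar\'e series, which is not enough for your Borel-type argument.

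The missing idea is the degeneration argument. The spectral sequence is multiplicative, and the homology of $\cK_\bullet(\g)$ is generated by classes of bidegree $(s,n)=(p,m)$. Each higher differential sends such a generator into diagonal degree $p-r+1<p$ and homological degree $m-1>0$. The Koszul homology vanishes there: it is zero in diagonal degrees strictly between $0$ and $p$, and in diagonal degree $0$ it lives only in homological degree $0$. Hence all higher differentials vanish.

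You must then still do two things:
\begin{itemize}
\item Lift the associated-graded isomorphism to $H_\bullet(\cQ_\bullet(\g))$ itself. The generators have unique lifts, these are primitive for filtration reasons, and they graded-commute because the primitives of $\wedge^\bullet E_\g$ are concentrated in odd degrees.
\item Show that $\g$ acts trivially on $H_\bullet(\cQ_\bullet(\g))$. For example, the homotopy $h(x\otimes q)=x[1]q$ satisfies $dh+h(1\otimes d)=\bigl(x\otimes q\mapsto xq\bigr)$.
\end{itemize}

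\textbf{Consequences for the rest of your plan.} Once this is in place, exactness of $\uHom_{U(\g)}(-,V)$ gives $H^\bullet_{\CE}(\g,V)\cong(\wedge^\bullet E_\g)^*\otimes V^\g$ directly. That yields the cohomology computation, (ii) and the module statement at once. The detour through Borel's theorem becomes unnecessary; de Rham cohomology enters only to identify the coproduct, via $\Omega^\bullet(G)^*\cong\cQ_\bullet(\g)$.

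As you set it up, the Borel detour also has holes of its own:
\begin{itemize}
\item $(\wedge^\bullet\g^*)^\g=\uHom_{U(\g)}(\one,\wedge^\bullet\g^*)$ is a priori an object of $\Ver_p^+$ that might contain summands $L_m$, $m>1$, with zero action. Borel's structure theorem is a statement about Hopf algebras over $\kk$, and in characteristic $p$ it also allows truncated polynomial factors. It does not apply until you know the object is a sum of copies of $\one$.
\item Even then, Borel's theorem gives only an abstract algebra isomorphism. It does not give the canonical Hopf isomorphism with primitives $E_\g^*$ built from the Frobenius-twisted multiplicity spaces.
\end{itemize}

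Your Casimir argument for (ii) is also unjustified. Nothing in the hypotheses supplies a nondegenerate invariant form on $\g$ in $\Ver_p^+$. Nor does it supply a central element of $U(\g)$ acting on a given nontrivial simple $V$ by a scalar different from its augmentation value. You do not need one: (ii) follows from $V^\g=0$ once $H_\bullet(\cQ_\bullet(\g))$ is known to be a trivial module.
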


The idea of the proof of the main theorem is as follows. Since $U(\g)$ is finite and finite-dimensional $U(\g)$-modules are completely reducible, it follows that $U(\g)$ is a semisimple algebra, so 
the higher Ext groups between its modules must vanish. Nevertheless, the Chevalley-Eilenberg complex $C_{\CE}^\bullet(\g)=\wedge^\bullet \g^*$ 
need not be exact, as, in contrast with the classical case, it does not compute these Ext groups. 
Namely, $C_{\CE}^\bullet(\g)=\uHom_{U(\g)}(U(\g)\otimes \wedge^\bullet \g,\one)$, 
where $U(\g)\otimes \wedge^\bullet \g$ is the quantized Koszul complex of $\g$ 
whose PBW associated graded is the ordinary Koszul complex of the underlying object $\g$ (\cite{Eti18}). So the key reason for non-exactness of $C_{\CE}^\bullet(\g)$ 
is that this Koszul complex is not exact, i.e., the object $\g\in \Ver_p^+$ is not Koszul (\cite{Eti18}). Indeed, for $2\leq m\leq p-2$, the Koszul complex of the simple object $L_m$ has a nonzero homology class in homological degree $m$ and
diagonal degree $p$ \cite[Proposition~5.1]{Eti18}.  These classes account
exactly for the discrepancy between the Ext cohomology (which is zero in positive degrees) and
the Chevalley-Eilenberg cohomology, which leads to the proof of the main result.
More precisely, we show that the corresponding spectral sequence degenerates by using its multiplicative structure and the bidegrees of the almost-Koszul generators.
The resulting discrepancy between derived invariants and Chevalley--Eilenberg
cohomology is summarized in Remark~\ref{rem:derived-invariants}.

The organization of the paper is as follows. Section 2 contains preliminaries, a proof that a linearly reductive Lie algebra in $\Ver_p^+$ is invariantless, and an interpretation of the Chevalley-Eilenberg cohomology of a linearly reductive Lie algebra as de Rham cohomology of the corresponding group scheme.
In Section 3 we prove the main theorem. In Section 4 we give examples, and in Section 5 we discuss applications to ordinary Lie algebra cohomology. 

{\bf Acknowledgements.} In this paper we collaborated with ChatGPT 5.5 and 5.6 Pro. This work was partially supported by the NSF grant DMS-2502467. 

\section{Preliminaries}

\subsection{The categories $\Ver_p$ and \texorpdfstring{$\Ver_p^+$}{Ver-p-plus} and finiteness of symmetric and exterior algebras}

Recall that the Verlinde category $\Ver_p$ is the semisimplification of
$\Rep_{\kk}(\mathbb Z/p\mathbb Z)$.  Its simple objects are
$L_1=\one,L_2,\dots,L_{p-1}$, all of which are self-dual.  The invertible object $L_{p-1}$ generates a copy
of $\mathrm{sVec}$, and $\Ver_p=\Ver_p^+\boxtimes\mathrm{sVec}$, 
where $\Ver_p^+$ is generated by the $L_m$ with $m$ odd; see e.g. 
\cite{EOV17,Eti18}.

For $X\in\Ver_p$, set
$X_m:=\Hom_{\Ver_p}(L_m,X)$. 
The evaluation maps give a canonical decomposition
$
 X\cong\bigoplus_{1\leq m\leq p-1}X_m\otimes L_m.
$
Thus each $X_m$ is an ordinary finite-dimensional vector space.  We call $X$
\emph{invariantless} if $X_1=0$.

\begin{lemma}\label{lem:finiteness}
If $X\in\Ver_p^+$ is invariantless, then $SX$ and $\wedge X$ are finite
objects.  
\end{lemma}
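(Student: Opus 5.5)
The plan is to reduce to the case of a single simple object $L_m$ with $m$ odd and $3\le m\le p-2$, and then to use the known structure of symmetric and exterior powers of simple objects in $\Ver_p$. Since $X$ is invariantless, write $X\cong\bigoplus_{m}X_m\otimes L_m$, where $m$ runs over odd $3\le m\le p-2$. Both $S$ and $\wedge$ convert direct sums into tensor products:
\[
S(Y\oplus Z)\cong SY\otimes SZ,\qquad \wedge(Y\oplus Z)\cong\wedge Y\otimes\wedge Z .
\]
These identities hold in any symmetric tensor category in characteristic $p$, because $S^n(Y\oplus Z)=\bigoplus_{i+j=n}S^iY\otimes S^jZ$, and similarly for $\wedge$. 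A finite tensor product of finite objects is finite, so it suffices to show that $SL_m$ and $\wedge L_m$ are finite for each such $m$.

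For a single simple object $L_m$ with $2\le m\le p-2$, I would invoke the known computation from \cite{Eti18} (see also \cite{EOV17}). That result says $S^nL_m=0$ for $n\ge p-m+1$ and $\wedge^nL_m=0$ for $n\ge m+1$. Concretely:
\begin{itemize}
\item $SL_m$ has top degree $p-m$;
\item $\wedge^nL_m\cong S^nL_{p-m}$, via $L_m\cong L_{p-m}\otimes L_{p-1}$ and the fact that tensoring with the odd line $L_{p-1}$ swaps $S$ and $\wedge$ up to that twist.
\end{itemize}
If I had to justify this rather than cite it, I would realize $L_m$ as the image under semisimplification of the Jordan block $J_m$ with $\mathbb Z/p$-action, that is, $V_{m-1}$ for $SL_2$ restricted to a principal unipotent. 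I would then use the fact that semisimplification is symmetric monoidal on the relevant quotient. The symmetric power $S^n$ can be expressed via the Cauchy-type formula $\dim S^nL_m=\binom{m+n-1}{n}_q$ with $q=e^{\pi i/p}$. This $q$-binomial vanishes once the top row reaches $p$, i.e. when $n=p-m+1$. Since the categorical dimension in $\Ver_p$ is positive on each nonzero object, in the sense of Frobenius–Perron dimension computed this way, vanishing of the dimension forces $S^nL_m=0$.

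Finally, I would conclude that $SL_m\cong\bigoplus_{n=0}^{p-m}S^nL_m$ and $\wedge L_m\cong\bigoplus_{n=0}^{m}\wedge^nL_m$ are finite direct sums of objects of $\Ver_p$, hence finite. Here the invariantless hypothesis is exactly what excludes $m=1$, where $S\one$ is a polynomial algebra and is infinite. The same reasoning also excludes a copy of $L_{p-1}$, for which $\wedge$ would be infinite; but $L_{p-1}$ does not lie in $\Ver_p^+$ anyway. For $m=p-1$ we would not need the argument, since such summands do not occur.

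The main obstacle is the vanishing $S^nL_m=0$ for $n>p-m$, and its exterior analogue, if it is to be proved rather than quoted. The delicate point is passing from the vanishing of a $q$-dimension to the vanishing of the object. I would try first to cite \cite[Section~4 or 5]{Eti18} directly. As a fallback, I would use the Frobenius–Perron dimension character: $\operatorname{FPdim}$ is a ring homomorphism on the Grothendieck ring, positive on every nonzero object, and $\operatorname{FPdim}L_m=[m]_q$. The generating series $\sum_n \operatorname{FPdim}(S^nL_m)t^n=\prod_{j=0}^{m-1}(1-q^{m-1-2j}t)^{-1}$ would then be identified with the $q$-binomial generating function, which truncates.
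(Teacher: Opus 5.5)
Your proposal is correct and follows the paper's proof essentially verbatim. Like the paper, you decompose $X$ into copies of $L_m$ with $2\le m\le p-2$, use $S(Y\oplus Z)\cong SY\otimes SZ$ and $\wedge(Y\oplus Z)\cong\wedge Y\otimes\wedge Z$, and quote $S^iL_m=0$ for $i>p-m$ and $\wedge^jL_m=0$ for $j>m$ from \cite[Proposition~2.4]{EOV17} and \cite[Proposition~5.1]{Eti18}.

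Your optional fallback is not needed, and as written it is slightly off. The categorical dimension of $L_m$ is $m\in\kk$, so it is not a positive number; only $\operatorname{FPdim}$ is positive on nonzero objects. Also, the identity $\sum_n\operatorname{FPdim}(S^nL_m)t^n=\prod_j(1-q^{m-1-2j}t)^{-1}$ can only hold in degrees $n<p$: for $m=2$ the coefficient of $t^p$ is $[p+1]_q=-1$. That restricted range is enough, because $n=p-m+1<p$ and the symmetric algebra $SL_m$ is generated in degree one.
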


\begin{proof}
For $2\leq m\leq p-2$, one has
$S^iL_m=0\ (i>p-m),\ 
 \wedge^jL_m=0\ (j>m)$;
see \cite[Proposition~2.4]{EOV17} and \cite[Proposition~5.1]{Eti18}.
Both $S$ and $\wedge$ take direct sums to graded tensor products.  After
choosing bases in the multiplicity spaces, $X$ is a finite direct sum of copies
of $L_m$ with $2\leq m\leq p-2$, proving the asserted finiteness.
\end{proof}

\subsection{Exterior powers}\label{subsec:exterior-conventions}

Let $\cC$ be a symmetric tensor category over $\kk$; see
\cite[Definitions~4.1.1, 8.1.1, and 8.1.12]{EGNO15}.  For $V\in\cC$, let
$\wedge^nV$ be the ordinary exterior power and let
$\Lambda^nV:=\left(\wedge^nV^*\right)^*$
be the dual exterior power.  Thus $\wedge^nV$ is the object of
anticoinvariants in $V^{\otimes n}$, while $\Lambda^nV\subset V^{\otimes n}$
is the object of antiinvariants.  Set
\[
 \wedge V:=\bigoplus_{n\geq0}\wedge^nV,
 \qquad
 \Lambda V:=\bigoplus_{n\geq0}\Lambda^nV.
\]
Both are graded Hopf algebras.

\begin{lemma}\label{lem:exterior-comparison}
(i) For every $V\in\cC$ and $n\geq0$, the alternating morphism
\[
 e_n^-:=\sum_{\sigma\in S_n}\operatorname{sgn}(\sigma)\sigma:
 V^{\otimes n}\longrightarrow\Lambda^nV
\]
descends to a morphism
\[
 \xi_V^n:\wedge^nV\longrightarrow\Lambda^nV.
\]
The morphisms $\xi_V^n$ define a natural morphism of graded Hopf algebras
\[
 \xi_V:\wedge V\longrightarrow\Lambda V.
\]
Moreover, under the canonical identifications
\[
 \wedge(V\oplus W)\cong\wedge V\otimes\wedge W,
 \qquad
 \Lambda(V\oplus W)\cong\Lambda V\otimes\Lambda W,
\]
one has $\xi_{V\oplus W}=\xi_V\otimes\xi_W$.  

(ii) If $V\in\Ver_p^+$, then
$\xi_V$ is an isomorphism.
\end{lemma}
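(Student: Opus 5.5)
Part (i) is formal, and we will prove it in the following order. By definition $\Lambda^nV\subset V^{\otimes n}$ is the image of the antisymmetrizer, or more precisely the largest subobject on which every $\sigma\in S_n$ acts by $\operatorname{sgn}(\sigma)$. Since $\tau e_n^-=\operatorname{sgn}(\tau)e_n^-$, the image of $e_n^-$ lies in $\Lambda^nV$. Since $e_n^-\tau=\operatorname{sgn}(\tau)e_n^-$, the morphism $e_n^-$ kills $\mathrm{Im}(\tau-\operatorname{sgn}\tau)$ and therefore factors through the anticoinvariants $\wedge^nV$. Naturality in $V$ is clear.

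For the Hopf structure we use the following descriptions:
\begin{itemize}
\item the product on $\wedge V$ is induced by concatenation $V^{\otimes a}\otimes V^{\otimes b}\to V^{\otimes(a+b)}$;
\item the coproduct on $\wedge V$ is the sum over $(a,b)$-shuffles of signed shuffle maps;
\item on $\Lambda V$ (dual to $\wedge V^*$), the product is the signed shuffle sum and the coproduct is the restriction of deconcatenation.
\end{itemize}
Compatibility of $\xi$ with the products then reduces to the identity in $\kk S_{a+b}$
\[
e_{a+b}^-=\Big(\textstyle\sum_{\text{shuffles}}\operatorname{sgn}(s)\,s\Big)(e_a^-\otimes e_b^-)/(\text{redundancy}).
\]
To avoid dividing, write $e_{a+b}^-=\sum_s \operatorname{sgn}(s)\,s\,(e_a^-\otimes e_b^-)$, which holds because $S_{a+b}=\mathrm{Sh}_{a,b}\cdot(S_a\times S_b)$ uniquely. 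Compatibility with the coproducts follows from the same factorization read in the other order. The unit and counit are trivial to check. The additivity $\xi_{V\oplus W}=\xi_V\otimes\xi_W$ follows because both identifications are Hopf algebra maps built from the inclusions of $V$ and $W$, and $\xi$ is natural and multiplicative. It therefore suffices to compare the two sides on generators $V\oplus W$ in degree $1$, where both are the identity.

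For part (ii), we first reduce to simple objects. By (i), $\xi$ is compatible with direct sums, so it suffices to treat $V=L_m$ with $m$ odd, $1\le m\le p-2$; tensoring with multiplicity spaces is harmless, since $L_m\otimes Y$ is a direct sum of copies of $L_m$.

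For $V=L_m$ we argue as follows:
\begin{itemize}
\item We know $\wedge^nL_m$ explicitly: it is $0$ for $n>m$, and for $n\le m$ it is simple, as the semisimplification of $\wedge^n$ of the Jordan block $J_m$ (by \cite{Eti18}).
\item Because $\Ver_p$ is semisimple, $\Lambda^nL_m\cong(\wedge^nL_m^*)^*\cong(\wedge^nL_m)^*$ is abstractly isomorphic to $\wedge^nL_m$, so both sides are simple or zero.
\item It therefore suffices to show $\xi^n_{L_m}\neq0$ whenever $\wedge^nL_m\ne0$, i.e.\ for $n\le m$.
\end{itemize}

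The key point is that $e_n^-$ is nonzero on $L_m^{\otimes n}$. We would deduce this from the fact that $\xi$ is a homomorphism of Hopf algebras. Its kernel is a graded Hopf ideal of $\wedge L_m$ that does not meet degree $1$, since $\xi^1=\mathrm{id}$. We would then show that $\wedge L_m$ has no nonzero Hopf ideal vanishing in degree $1$: if $I$ is such an ideal and $x\in I$ is nonzero of minimal degree $n\ge2$, then $x$ is primitive modulo lower degrees. Indeed, $\Delta(x)-x\otimes1-1\otimes x$ lies in $\sum(I\otimes\wedge+\wedge\otimes I)$ in degrees below $n$, which vanish. So $x$ is primitive. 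The primitives of $\wedge L_m$ should lie in degree $1$, and to prove this we use the nondegenerate pairing between $\wedge L_m$ and $\wedge L_m^*$ (which lives in a semisimple category where $\wedge L_m^*$ is generated in degree $1$). A primitive $x$ of degree $n\geq 2$ pairs to zero with all products of degree-$1$ elements. These products span $\wedge^nL_m^*$, so $x=0$.

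We expect the main obstacle to be this last step. It needs nondegeneracy of the pairing $\wedge^nV\otimes\wedge^nV^*\to\one$, i.e.\ essentially that $\wedge^n$ commutes with duality in $\Ver_p^+$, which is close to what is being proved. If it fails, the fallback is a direct computation: lift to $\Rep(\mathbb Z/p)$, realize $L_m$ as the Jordan block $J_m$ with $m\le p-2$, where $n!$ is invertible for $n\le m<p$, and note that the antisymmetrizer $e_n^-$ on $J_m^{\otimes n}$ is then $n!$ times an idempotent. This identifies $\wedge^n$ with $\Lambda^n$ already before semisimplification, and so $\xi$ is an isomorphism. Since $n\le m<p$ whenever the powers are nonzero, this fallback in fact seems the cleanest route and may replace the Hopf argument entirely. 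For $n>m$ both sides vanish, because $\Lambda^nL_m=(\wedge^nL_m)^*=0$.
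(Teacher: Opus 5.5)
Your fallback argument is correct and is essentially the paper's proof. For $n<p$, let $\psi^n_V$ be the inclusion $\Lambda^nV\subset V^{\otimes n}$ followed by the projection to $\wedge^nV$. Then $\psi^n_V\xi^n_V=n!\,\mathrm{id}$ and $\xi^n_V\psi^n_V=n!\,\mathrm{id}$, so $\xi^n_V$ is an isomorphism. For $V=L_m$ with $m\le p-2$, both sides vanish in degrees $n>m$ by \cite[Proposition~5.1]{Eti18} and self-duality, so only degrees $n<p$ occur. The additivity from (i) then gives all of $\Ver_p^+$.

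You do not need to lift to Jordan blocks in $\Rep(\mathbb Z/p)$. That detour would also oblige you to check that semisimplification commutes with $\wedge^n$. Once $n<p$, $e_n^-/n!$ is already an idempotent on $V^{\otimes n}$ in any symmetric tensor category, in particular directly in $\Ver_p$.

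Your part (i) is correct. It is a more explicit shuffle-factorization version of the paper's argument, which instead observes that $\xi_V$ is the algebra map extending $\mathrm{id}_V$ and that $V$ is primitive on both sides.

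You should discard the Hopf-ideal route rather than keep it as the main line.
\begin{enumerate}[label=\textup{(\arabic*)}]
\item Its first step is false: $\wedge^nL_m$ is not simple in general for $n\le m$. For $p\ge 11$ one has $\wedge^2L_5\cong L_3\oplus L_7$, coming from $\wedge^2V_4\cong V_6\oplus V_2$ for $SL_2$ in the bottom alcove. So nonvanishing of $\xi^n_{L_m}$ would not suffice. Injectivity would suffice, since $\Lambda^nL_m\cong\wedge^nL_m$ abstractly and objects have finite length.
\item Your argument for injectivity is circular, as you suspected. The natural dual of $\wedge V$ is $\Lambda V^*$, not $\wedge V^*$. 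The absence of primitives of degree $\ge 2$ in $\wedge V$ is equivalent to $\Lambda V^*$ being generated in degree one, and that is exactly surjectivity of $\xi_{V^*}$.
\end{enumerate}
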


\begin{proof}
(i) Let $s_i$ denote the symmetry interchanging the $i$-th and $(i+1)$-st
factors of $V^{\otimes n}$.  One has
\[
 e_n^-(1+s_i)=
 (1+s_i)e_n^-=0.
\]
The first identity shows that $e_n^-$ annihilates the relations defining
$\wedge^nV$, and the second shows that its image is contained in the
antiinvariants $\Lambda^nV$.  Hence $e_n^-$ induces $\xi_V^n$.

The multiplication in $\Lambda V$ is dual to the comultiplication in
$\wedge V^*$, and the $n$-fold product $(\Lambda^1V)^{\otimes n}=V^{\otimes n}\to \Lambda^n V$ is the
alternating morphism $e_n^-$.  Thus $\xi_V$ is the algebra map induced by the
identity of $V$.  Since $V$ consists of primitive elements in both Hopf
algebras, this map also preserves comultiplication. Finally, under the direct-sum identifications, both
$\xi_{V\oplus W}$ and $\xi_V\otimes\xi_W$ are Hopf algebra maps extending the
identity on $V\oplus W$, so they coincide.

For $n<p$, let
\[
 \psi_V^n:\Lambda^nV\longrightarrow\wedge^nV
\]
be the composite of the inclusion into $V^{\otimes n}$ with the quotient onto
$\wedge^nV$.  Then
\[
 \psi_V^n\circ \xi_V^n=n!\,\mathrm{id}_{\wedge^nV},
 \qquad
 \xi_V^n\circ \psi_V^n=n!\,\mathrm{id}_{\Lambda^nV},
\]
so $\xi_V^n$ is an isomorphism for $n<p$.

(ii) Now let $V=L_m$ be simple in $\Ver_p^+$.  If $m>1$, then
$\Lambda^nL_m=0$ for $n>m$ by \cite[Proposition~5.1]{Eti18}; since $L_m$ is
self-dual, the same holds for $\wedge^nL_m$.  As $m\leq p-2$, all nonzero degrees are
strictly smaller than $p$, and hence $\xi_{L_m}$ is an isomorphism.  The same
is clear for $L_1=\one$.  But every object of $\Ver_p^+$ is a finite direct sum of
such simple objects, so the direct-sum compatibility proves the result for
all $V\in\Ver_p^+$.
\end{proof}

Henceforth all exterior powers are taken in $\Ver_p^+$, 
so we identify $\wedge^nV$ with $\Lambda^nV$ via $\xi_V^n$ and
denote both exterior algebras by $\wedge V$.

\begin{remark}\label{rem:exterior-failure} In general, $\xi_V$ need not be injective or surjective.  For example, if
$V$ is the one-dimensional odd vector space in $\mathrm{sVec}$, then
$\wedge^nV=\Lambda^nV=V^{\otimes n}$ and $\xi_V^n=n!\,\mathrm{id}$; hence
$\xi_V^p=0$ although both its source and target are nonzero. 
 \end{remark}

\subsection{Linear reductivity and invariantlessness}
\label{subsec:classical-part}

By an {\it operadic Lie algebra} in a symmetric tensor category we mean an object with a commutator satisfying skew-symmetry and the Jacobi identity. By \cite[Theorem~6.6]{Eti18}, every operadic Lie algebra 
in $\Ver_p^+$ is a Lie algebra in the sense of \cite[Definition~4.6]{Eti18} (as $\Hom_{\Ver_p}(L_2,\g)=0$) and satisfies PBW. Since the map $S\g\to {\rm gr}U(\g)$ is surjective (in fact, an isomorphism), 
 if $\g$ is an invariantless Lie algebra in
$\Ver_p^+$ then $U(\g)$ is finite by Lemma \ref{lem:finiteness}.

\begin{definition}\label{def:linear-reductive}
A Lie algebra $\g$ in $\Ver_p^+$ is \emph{linearly reductive} if the category
$\Rep(\g)$ of finite $U(\g)$-modules is semisimple. 
\end{definition}

 For a $\g$-module $V$,
let $V^{\g}:=\uHom_{U(\g)}(\one,V)$. 

\begin{proposition}\label{prop:LR-invariantless}
Every linearly reductive Lie algebra in $\Ver_p^+$ is invariantless.
\end{proposition}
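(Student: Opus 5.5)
We follow the strategy sketched in the introduction: relative PBW, then Shapiro's lemma, then a classical non-semisimplicity argument. Let $\g$ be linearly reductive and set $\g_1:=\Hom_{\Ver_p^+}(\one,\g)\otimes\one$.

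\emph{Step 1: $\g_1$ is an ordinary Lie algebra in $\g$.} The bracket restricted to $\g_1\otimes\g_1$ lands in $\g_1$. Indeed, $\one\otimes\one=\one$, and a morphism $\one\to\g$ factors through the isotypic component $\g_1$. So $\g_1$ is an ordinary finite-dimensional Lie algebra over $\kk$ and a Lie subalgebra of $\g$.

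\emph{Step 2: relative PBW.} Write $\g=\g_1\oplus\g'$ with $\g'$ invariantless. By PBW for $\g$ (\cite[Theorem~6.6]{Eti18}) we have $\gr U(\g)\cong S\g\cong S\g'\otimes S\g_1$. Choose a splitting $S\g'\to U(\g)$ by symmetrization, or by a filtered lift of homogeneous components. Then the multiplication map
\[
S\g'\otimes U(\g_1)\to U(\g)
\]
is a filtered map whose associated graded is an isomorphism, so it is an isomorphism. Hence $U(\g)$ is a free right $U(\g_1)$-module of finite rank, with basis object $S\g'$; this object is finite by Lemma~\ref{lem:finiteness}.

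\emph{Step 3: Shapiro's lemma.} For finite $\g_1$-modules $M,N$, induction $\mathrm{Ind}\,M:=U(\g)\otimes_{U(\g_1)}M$ is exact and takes finite modules to finite modules, by Step 2. It is left adjoint to restriction, which is also exact, so induction preserves projectives. Frobenius reciprocity then gives
\[
\Ext^i_{\g}(\mathrm{Ind}\,M,N')\cong\Ext^i_{\g_1}(M,N'|_{\g_1}).
\]
We want to show that every finite $\g_1$-module $N$ is a direct summand of the restriction of a finite $\g$-module. The natural candidate is the coinduced module
\[
\mathrm{Coind}\,N:=\uHom_{U(\g_1)}(U(\g),N),
\]
which is finite since $U(\g)$ is free of finite rank over $U(\g_1)$. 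The counit $\mathrm{Coind}\,N|_{\g_1}\to N$ needs a $\g_1$-equivariant section. We will obtain it from the component of $1$ in the decomposition $U(\g)\cong U(\g_1)\otimes S\g'$. For this we use the left version of Step 2 and the fact that $\g_1$ acts on $S\g'$ through the adjoint action, with the unit component $\kk\cdot1$ in degree $0$ split off by the grading.

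Granting this, every $\g_1$-extension $0\to N\to E\to M\to0$ has class in
\[
\Ext^1_{\g_1}(M,N)\hookrightarrow\Ext^1_{\g_1}(M,\mathrm{Coind}\,N|_{\g_1})\cong\Ext^1_{\g}(\mathrm{Ind}\,M,\mathrm{Coind}\,N)=0,
\]
where the last group vanishes by linear reductivity. Hence finite $\g_1$-modules are semisimple. (Alternatively, one can work directly with $\Ext_\g(\mathrm{Ind}\,M,\mathrm{Ind}\,N)$ and use the unit summand.)

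\emph{Step 4: conclusion.} Suppose $\g_1\neq0$ and pick $x\in\g_1$ with $x\neq0$. On the restricted enveloping side, $U(\g_1)$ has a finite-dimensional quotient through which one can produce a non-split extension of the trivial module by itself. Concretely, if $\g_1\neq[\g_1,\g_1]$, a nonzero character $\g_1\to\kk$ yields the non-split $2$-dimensional module $\begin{pmatrix}0&\lambda\\0&0\end{pmatrix}$. If $\g_1$ is perfect, consider the finite-dimensional quotient $U(\g_1)/(x^{p^N}-c)$-type quotients. Better: $u(\g_1)$-modules (restricted ones, after a $p$-structure on $\g_1$ via the $p$-center) are never all semisimple, since restricted enveloping algebras are Frobenius and not semisimple unless $\g_1$ is abelian toral. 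Even then, one gets non-split modules by letting the $p$-center act by Jordan blocks, using $U(\g_1)$ over its infinite-dimensional central subalgebra $Z_p$. Either way this contradicts Step 3, so $\g_1=0$.

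The main obstacle is Step 3, specifically the $\g_1$-equivariant splitting that makes $N$ a summand of $\mathrm{Coind}\,N|_{\g_1}$, together with the precise form of the relative PBW theorem (Step 2) in $\Ver_p^+$. The last step is classical but requires choosing a clean uniform non-split example; the $Z_p$ Jordan-block module is what I would try first.
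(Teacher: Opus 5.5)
Your overall route (relative PBW, Shapiro's lemma, then non-semisimplicity of ordinary $\g_1$-modules) is the paper's. However, Step~3 as written rests on a claim you only grant: that $N$ is a $\g_1$-equivariant direct summand of $\mathrm{Coind}\,N|_{\g_1}$. Your parenthetical alternative via $\mathrm{Ind}\,N$ needs the same thing. The justification you indicate does not establish it.

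The decomposition $U(\g)\cong U(\g_1)\otimes S\g'$ comes from a choice of lift $\sigma\colon S\g'\to U(\g)$, and it is only an isomorphism of one-sided $U(\g_1)$-modules. Right multiplication by $y\in\g_1$ sends $u\,\sigma(s)$ to $uy\,\sigma(s)-u\,[y,\sigma(s)]$. Unless $[y,\sigma(s)]=\sigma(\mathrm{ad}_y s)$, the error term is controlled only modulo lower filtration and can have a nonzero component in $U(\g_1)=U(\g_1)\otimes S^0\g'$. So ``projection onto the degree-$0$ component'' need not be right $U(\g_1)$-linear: the grading of $S\g'$ only induces a filtration on $U(\g)$. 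Symmetrization would give an equivariant $\sigma$, but it divides by $k!$, and $S\g'$ can live in degrees $\geq p$ (e.g.\ $S(L_3\oplus L_3)=SL_3\otimes SL_3$ reaches degree $2p-6$).

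The claim can be rescued by using linear reductivity once more. Semisimplicity, applied to the finite modules $F^{\mathrm{PBW}}_kU(\g)$ under the adjoint action, gives $\mathrm{ad}\,\g$-equivariant splittings of $F^{\mathrm{PBW}}_kU(\g)\to S^k\g$. This yields an $\mathrm{ad}\,\g_1$-equivariant PBW lift of $S\g'$, which makes $U(\g_1)$ a bimodule direct summand of $U(\g)$.

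But the detour is unnecessary: take $N'=\one$ in your own Shapiro isomorphism. Since $\one$ restricts to the trivial $\g_1$-module, you get $\Ext^1_{\g_1}(M,\kk)\cong\Ext^1_{\Rep(\g)}(\mathrm{Ind}\,M,\one)=0$ for all finite-dimensional $M$. Then $\Ext^1_{\g_1}(M,N)\cong\Ext^1_{\g_1}(N^*\otimes M,\kk)=0$. This is exactly the paper's argument.

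Step~4 is also only a sketch. The detour through restricted structures is not justified, since an arbitrary finite-dimensional Lie algebra need not admit a $p$-structure. The paper simply invokes Zassenhaus's theorem (Lemma~\ref{lem:ordinary-not-semisimple}).

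If you want a self-contained argument, your Jordan-block idea works once made precise:
\begin{enumerate}[leftmargin=2.3em]
\item For $x\in\g_1$, pick a monic $p$-polynomial $\phi$ with $\phi(\mathrm{ad}\,x)=0$. Then $\phi(x)$ is central in $U(\g_1)$, because $\mathrm{ad}(x^{p^i})=(\mathrm{ad}\,x)^{p^i}$.
\item For a basis $x_1,\dots,x_n$ of $\g_1$, the elements $\phi_i(x_i)$ generate a central polynomial subalgebra $Z_p$. Over $Z_p$, $U(\g_1)$ is free of finite rank with $1$ as a basis element.
\item If $\g_1\neq0$ and $\mathfrak m\subset Z_p$ is a maximal ideal, then $U(\g_1)/\mathfrak m^2U(\g_1)$ is finite-dimensional, and any $z\in\mathfrak m\setminus\mathfrak m^2$ acts on it by a nonzero nilpotent operator.
\item Central elements act by scalars on finite-dimensional simple modules, so this module is not semisimple.
\end{enumerate}
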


\begin{proof} We will use 

\begin{lemma}\label{lem:ordinary-not-semisimple} (\cite[Theorem~8, p.~30]{Zas54})
The category of finite-dimensional representations of a non-zero finite-dimensional Lie algebra in positive characteristic is not semisimple.
\end{lemma}

Let $\g_1:=\Hom_{\Ver_p^+}(\one,\g)$; it is an ordinary Lie algebra over $\kk$.
Then $\g=\g_1\oplus\g_{\ne 1}$ and $\Hom_{\Ver_p^+}(\one,\g_{\ne 1})=0$.
Setting $\deg \g_1=0$ and $\deg \g_{\ne 1}=1$ gives a finite filtration of
$U(\g)$ as a right $U(\g_1)$-module with associated graded
$S(\g_{\ne 1})\otimes U(\g_1)$. 
Its successive quotients are free, and $S(\g_{\ne 1})$ is finite by
Lemma~\ref{lem:finiteness}; hence $U(\g)$ is a finite free right
$U(\g_1)$-module.

It follows that induction from $\g_1$ to $\g$ is exact and for every finite-dimensional ordinary $\g_1$-module $X$, the $\g$-module $U(\g)\otimes_{U(\g_1)}X$ is finite.  Shapiro's lemma for external Yoneda
Ext therefore gives
\[
 \Ext^1_{\g_1}(X,\kk)
 \cong
 \Ext^1_{\Rep(\g)}
 \bigl(U(\g)\otimes_{U(\g_1)}X,\one\bigr)=0.
\]
The Ext group on the left is the ordinary one: an extension of the classical
objects $X$ and $\one$ in $\Ver_p^+$ has classical middle term, since
$\Ver_p^+$ is semisimple.  Consequently, for finite-dimensional
$\g_1$-modules $X,Y$,
\[
 \Ext^1_{\g_1}(X,Y)
 \cong \Ext^1_{\g_1}(Y^*\otimes X,\kk)=0.
\]
Thus the category of finite-dimensional $\g_1$-modules is semisimple, so
Lemma~\ref{lem:ordinary-not-semisimple} implies $\g_1=0$.
\end{proof}

\begin{lemma}\label{subal} A Lie subalgebra of a linearly reductive Lie algebra in $\Ver_p^+$ is linearly reductive.
\end{lemma}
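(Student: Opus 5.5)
Let $\mathfrak h\subset\g$ be a Lie subalgebra of a linearly reductive Lie algebra $\g$ in $\Ver_p^+$. The plan is to imitate the argument of Proposition~\ref{prop:LR-invariantless}: show that every finite $\mathfrak h$-module is a direct summand of the restriction of a finite $\g$-module, and then use semisimplicity of $\Rep(\g)$.

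First, $\g$ is invariantless by Proposition~\ref{prop:LR-invariantless}. Since $\Ver_p^+$ is semisimple, $\mathfrak h$ is a direct summand of $\g$ as an object, so $\mathfrak h$ is invariantless too. Hence, by the PBW theorem for operadic Lie algebras in $\Ver_p^+$ (\cite[Theorem~6.6]{Eti18}) and Lemma~\ref{lem:finiteness}, both $U(\mathfrak h)$ and $U(\g)$ are finite.

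Next, choose a complement $\g=\mathfrak h\oplus\mathfrak m$ in $\Ver_p^+$. Filter $U(\g)$ by $\mathfrak m$-degree, exactly as in the proof of Proposition~\ref{prop:LR-invariantless}. The associated graded is $S(\mathfrak m)\otimes U(\mathfrak h)$ as a right $U(\mathfrak h)$-module. Consequently $U(\g)$ is a finite free right $U(\mathfrak h)$-module, induction $\mathrm{Ind}=U(\g)\otimes_{U(\mathfrak h)}-$ is exact, and it sends finite $\mathfrak h$-modules to finite $\g$-modules.

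Now fix a short exact sequence $0\to Y\to Z\to X\to 0$ of finite $\mathfrak h$-modules; we want to show it splits. By Shapiro's lemma (Frobenius reciprocity for Yoneda Ext with exact induction and restriction),
\[
\Ext^1_{\mathfrak h}(X,Y)\cong \Ext^1_{\g}(X,\mathrm{Coind}\,Y).
\]
The right-hand side would vanish by semisimplicity, provided the coinduced module $\mathrm{Coind}\,Y=\uHom_{U(\mathfrak h)}(U(\g),Y)$ is a finite $\g$-module. This holds because $U(\g)$ is finite free over $U(\mathfrak h)$. For the Shapiro isomorphism to apply, $\mathrm{Coind}$ must also be exact, which again follows from freeness.

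An alternative route avoids coinduction. Since $U(\g)$ is free over $U(\mathfrak h)$, the unit map $X\to \mathrm{Res}\,\mathrm{Ind}\,X$, $x\mapsto 1\otimes x$, is split injective as $\mathfrak h$-modules: the summand $1\otimes U(\mathfrak h)$ is a basis element, and a left $\mathfrak h$-equivariant splitting must be built. To check equivariance, one can instead pass to the adjoint statement via $\Ext^1_{\mathfrak h}(X,Y)\cong\Ext^1_{\mathfrak h}(\one,X^*\otimes Y)$. It then suffices to show $\Ext^1_{\mathfrak h}(\one,W)=0$ for every finite $W$, which is the Shapiro statement applied with $X=\one$.

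I expect the main obstacle to be exactness and finiteness of coinduction together with the correct Shapiro isomorphism in $\Ver_p^+$. One cannot simply quote the argument used earlier, because there the roles were reversed: Ext over the subalgebra was computed from Ext over $\g$ using induction into the second slot. My first attempt would be to identify $\mathrm{Coind}\,Y\cong \mathrm{Ind}(Y\otimes D)$ for a suitable invertible "modular" character object $D$, as for Frobenius extensions. Failing that, I would verify directly that $\uHom_{U(\mathfrak h)}(U(\g),-)$ is exact, using that $U(\g)\cong S(\mathfrak m)\otimes U(\mathfrak h)$ as a right $U(\mathfrak h)$-module with $S(\mathfrak m)$ rigid, and then apply the standard adjunction $\Ext^1_{\mathfrak h}(\mathrm{Res}\,X,Y)\cong\Ext^1_{\g}(X,\mathrm{Coind}\,Y)$.
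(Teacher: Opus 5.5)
Your argument is correct, but it takes a different route from the paper's. The paper uses PBW only to get an inclusion $U(\mathfrak a)\subset U(\g)$ of finite-dimensional Hopf algebras. It then observes that restriction $\Rep(\g)\to\Rep(\mathfrak a)$ is a surjective tensor functor: every finite $\mathfrak a$-module $X$ is a subquotient of the restriction of $U(\g)\otimes X$. Finally it cites Etingof--Ostrik for the fact that semisimplicity descends along surjective tensor functors between finite tensor categories, essentially because such functors preserve projectives, so $\one$ stays projective. That route needs only injectivity of $U(\mathfrak a)\to U(\g)$, not freeness, but it outsources the key step.

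Your route is self-contained. It uses relative PBW freeness, an adjunction between exact functors on Yoneda $\Ext^1$, and the rigidity reduction $\Ext^1_{\mathfrak h}(X,Y)\cong\Ext^1_{\mathfrak h}(\one,X^*\otimes Y)$. That reduction is not optional: your first Shapiro display makes no sense for an arbitrary $\mathfrak h$-module $X$, which need not be a $\g$-module.

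Three further remarks. First, your worry that the argument of Proposition~\ref{prop:LR-invariantless} cannot be reused is unfounded. That proof deduces semisimplicity for the subalgebra $\g_1$ from that of $\g$, which is exactly your situation, and it does so with induction in the first slot and the trivial module in the second. It transfers verbatim: $\Ext^1_{\mathfrak h}(X,Y)\cong\Ext^1_{\mathfrak h}(Y^*\otimes X,\one)\cong\Ext^1_{\g}\bigl(U(\g)\otimes_{U(\mathfrak h)}(Y^*\otimes X),\one\bigr)=0$. This uses only right freeness, so coinduction is unnecessary.

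Second, if you do use coinduction, note that $\uHom_{U(\mathfrak h)}(U(\g),-)$ is formed with the left $U(\mathfrak h)$-module structure on $U(\g)$. So you need left freeness $U(\g)\cong U(\mathfrak h)\otimes S(\mathfrak m)$, which follows from the same PBW argument with the factors reversed. Then $\mathrm{Coind}\,W\cong S(\mathfrak m)^*\otimes W$ as objects, which is finite and exact in $W$, and no Frobenius-extension or modular-character input is needed.

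Third, the split-injective-unit idea is unjustified as stated and can simply be dropped.
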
 

\begin{proof} Let $\g$ be a linearly reductive Lie algebra in $\Ver_p^+$ and 
$\mathfrak{a}\subset \g$ a Lie subalgebra. By Proposition \ref{prop:LR-invariantless},
$\g$ is invariantless, hence so is $\mathfrak{a}$. Thus by the PBW theorem 
$U(\mathfrak{a})\to U(\g)$ is an inclusion of finite dimensional Hopf algebras. 
It follows that the restriction functor $\Rep(\g)\to \Rep(\mathfrak{a})$ 
is a surjective tensor functor. Thus the Lemma follows from 
Example 3.3(ii),(iii) of \cite{EO04}.
\end{proof} 

\subsection{De Rham cohomology and the Hopf structure on
\texorpdfstring{$H^\bullet_{\CE}(\g)$}{CE cohomology}}
\label{sec:de-rham}

Let $\cC=\Ver_p^+$ and work in $\operatorname{Ind}(\cC)$.  For a finitely
generated commutative algebra object $A$, put $X=\operatorname{Spec}A$ and
let $\Omega_A^1$ be its module of K\"ahler differentials.  The de Rham
complex and de Rham cohomology of $X$ are
\[
 \Omega^\bullet(X):=
 \bigl(\Omega_A^0\xrightarrow{d}\Omega_A^1\xrightarrow{d}
 \Omega_A^2\xrightarrow{d}\cdots\bigr),
 \qquad
 H^\bullet_{\dR}(X):=H^\bullet(\Omega^\bullet(X)),
\]
where $\Omega_A^i:=\wedge_A^i\Omega_A^1$ (so $\Omega_A^0=A$). 
For affine schemes of finite type one has naturally
\[
 \Omega^\bullet(X\times Y)
 \cong \Omega^\bullet(X)\otimes\Omega^\bullet(Y).
\]
Since tensor product is exact in $\operatorname{Ind}(\cC)$, this gives the
K\"unneth isomorphism
\[
 H^\bullet_{\dR}(X\times Y)
 \cong H^\bullet_{\dR}(X)\otimes H^\bullet_{\dR}(Y).
\]
Also if $X$ is an affine space (i.e., an object of $\Ver_p^+$) then $\Omega^\bullet(X)$ coincides with the de Rham complex defined in \cite{Eti18}, Subsection 2.4 (as in $\Ver_p^+$, $\wedge X\cong \Lambda X$).

If $G$ is an affine group scheme of finite type in $\cC$, pullback by
multiplication, the unit, and inversion makes $\Omega^\bullet(G)$ a
commutative differential graded Hopf algebra.  In particular,
\[
 m^*:H^\bullet_{\dR}(G)
 \longrightarrow H^\bullet_{\dR}(G\times G)
 \cong H^\bullet_{\dR}(G)\otimes H^\bullet_{\dR}(G)
\]
is a coproduct, and $H^\bullet_{\dR}(G)$ is a graded Hopf algebra.

Let $\g=\Lie(G)$.  Left translation gives a canonical trivialization
\begin{equation}\label{eq:left-trivialization-forms}
 \Omega^\bullet(G)
 \cong \mathcal O(G)\otimes\wedge^\bullet\g^*.
\end{equation}
Under this identification the de Rham differential is the
Chevalley--Eilenberg differential with coefficients in the regular
$\g$-module $\mathcal O(G)$, while the subcomplex of left-invariant forms is
$C^\bullet_{\CE}(\g)$.

A finite group scheme $G$ is called \emph{linearly reductive} if its
category of finite representations is semisimple.  For the next statement,
call $G$ \emph{height one} if
its distribution algebra $\mathcal O(G)^*$ is generated, as an algebra, by
$\g=\Lie(G)$.  This is the property of the height-one group schemes used in
\cite[Section~4]{CEO24}; in particular, the group scheme 
$G=\exp(\g), \mathcal O(G)^*=U(\g)$
has height one.

\begin{proposition}\label{prop:dR-CE}
Let $G$ be a finite linearly reductive height-one group scheme in $\Ver_p^+$,
and let $\g=\Lie(G)$.  Inclusion of left-invariant forms induces a canonical
isomorphism of graded algebras
\[
 H^\bullet_{\CE}(\g)\xrightarrow{\ \sim\ }H^\bullet_{\dR}(G).
\]
Consequently, $H^\bullet_{\CE}(\g)$ has a canonical graded Hopf algebra
structure.  
\end{proposition}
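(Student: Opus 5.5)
We use the trivialization \eqref{eq:left-trivialization-forms}. It identifies $\Omega^\bullet(G)$ with $C^\bullet_{\CE}(\g,\mathcal O(G))$, the Chevalley--Eilenberg complex with coefficients in the regular $\g$-module $\mathcal O(G)$, and left-invariant forms with $C^\bullet_{\CE}(\g)=C^\bullet_{\CE}(\g,\one)$. Under this identification, the inclusion of left-invariant forms is induced by the unit map $\one\to\mathcal O(G)$.

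\textbf{Step 1 (splitting the coefficients).} Since $G$ is finite, $\mathcal O(G)$ is a finite object, and it is a $\g$-module via the infinitesimal right-translation action. Because $G$ has height one, $\mathcal O(G)^*$ is generated by $\g$. Hence $\mathcal O(G)$ is a finite $U(\g)$-module, and a subobject of it is a $G$-submodule if and only if it is a $\g$-submodule. By linear reductivity it decomposes as
\[
\mathcal O(G)=\mathcal O(G)^{\g}\oplus \mathcal O(G)',
\]
where $\mathcal O(G)'$ is a direct sum of simple modules with nontrivial action. Moreover $\mathcal O(G)^{\g}=\mathcal O(G)^G=\one$: invariants of the regular representation are the constants, by $\uHom_G(\one,\mathcal O(G))\cong\one$ (Frobenius reciprocity / the comodule structure). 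Therefore
\[
C^\bullet_{\CE}(\g,\mathcal O(G))=C^\bullet_{\CE}(\g)\oplus C^\bullet_{\CE}(\g,\mathcal O(G)')
\]
as complexes, with the first summand exactly the left-invariant forms.

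\textbf{Step 2 (vanishing of the complement).} It remains to show $H^\bullet_{\CE}(\g,V)=0$ for each simple $V$ with nontrivial $\g$-action. This is the main obstacle: the Chevalley--Eilenberg complex does not compute Ext in $\Ver_p^+$, so semisimplicity alone does not suffice. I would instead use the classical Cartan-type homotopy argument.

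The action of $U(\g)$ on $C^\bullet_{\CE}(\g,V)$ is homotopically trivial on cohomology. For $x\in\g$ one has Cartan's formula $L_x=d\iota_x+\iota_x d$, and $L_x$ is the diagonal action. Schematically, the action map
\[
\g\otimes C^\bullet_{\CE}(\g,V)\to C^\bullet_{\CE}(\g,V)
\]
is null-homotopic, via the contraction $\iota$, as a map of complexes in $\Ver_p^+$. Hence $\g$ acts by zero on $H^\bullet_{\CE}(\g,V)$; each cohomology object is therefore a trivial $\g$-module and is a quotient of a subobject of $C^\bullet_{\CE}(\g,V)$.

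Now $C^n_{\CE}(\g,V)$ is a finite $\g$-module, hence semisimple. Taking the isotypic decomposition, which $d$ preserves because $d$ is $\g$-equivariant, the complex splits as
\[
C^\bullet_{\CE}(\g,V)=C^\bullet_{\CE}(\g,V)^{\g}\oplus C^\bullet_{\rm nontriv}.
\]
The second summand has cohomology with no trivial constituents, but that cohomology must be trivial as a $\g$-module, so it vanishes. Thus $H^\bullet_{\CE}(\g,V)=H^\bullet\bigl((\wedge\g^*\otimes V)^{\g}\bigr)$.

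To kill this remaining invariant part, I would use the Casimir-free trick of \emph{translating the coefficient action}. The invariant subcomplex is $\uHom_{\g}(\wedge^\bullet\g,V)$. On it, the cochain differential's first sum, which involves the action on $V$, can be compared with the second sum via invariance. Alternatively, and more robustly, I would argue at the level of $G$: $V$ is a summand of $\mathcal O(G)$, the de Rham complex is a module over the ring of $G$-invariant operators, and right translation by $G$ acts trivially on $H_{\dR}$ by homotopy invariance (height one, so generated by $\g$, where Cartan's formula gives nullhomotopy). Left translation commutes with the differential, and its effect on $C^\bullet_{\CE}(\g,\mathcal O(G))$ is the action on coefficients alone. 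By the same argument as above, left translation acts trivially on $H^\bullet_{\dR}(G)$, so only the coefficient-isotypic component $\one$ survives. This yields $H^\bullet_{\CE}(\g,\mathcal O(G)')=0$ by considering the $\g$-action on coefficients alone.

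\textbf{Step 3 (Hopf structure).} The isomorphism is multiplicative because the wedge product of left-invariant forms is left-invariant. The Hopf algebra structure is then transported from $H^\bullet_{\dR}(G)$, as in the paragraph preceding the proposition.
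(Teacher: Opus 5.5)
Your proposal is correct, and its load-bearing argument is exactly the paper's averaging proof. That argument is: left translations act on $\Omega^\bullet(G)\cong\mathcal O(G)\otimes\wedge^\bullet\g^*$ through the coefficient factor only, via the left-regular action, which commutes with $d$, unlike acting on coefficients by the $\g$-module structure used in the Chevalley--Eilenberg differential; they act trivially on $H^\bullet_{\dR}(G)$ by Cartan's formula plus the height-one property; and by linear reductivity their invariants are exact, with $\mathcal O(G)^G=\one$.

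The rest can be dropped: the Step~1 splitting, the reduction to vanishing of $H^\bullet_{\CE}(\g,V)$ for every nontrivial simple $V$ (stronger than needed, and your final argument only yields it for $\mathcal O(G)'$), the diagonal Cartan reduction, and the ``Casimir-free trick'' (which is not an argument). Applying the left-translation argument to the whole complex gives $H^\bullet(\Omega^\bullet(G)^G)\cong H^\bullet_{\dR}(G)^G=H^\bullet_{\dR}(G)$ directly.
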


\begin{proof}
This is the usual averaging argument for invariant differential forms; cf.\
\cite{CE48,HK62}.  Let
$\operatorname{Av}_M:M\to M$ be the Reynolds idempotent for a finite
$G$-module $M$, whose image is $M^G$.  Equivalently, if
$\lambda:\mathcal O(G)\to\one$ is the normalized invariant integral and
$\rho_M:M\to\mathcal O(G)\otimes M$ is the coaction, then
\[
 \operatorname{Av}_M=(\lambda\otimes\operatorname{id}_M)\rho_M.
\]
Linear reductivity is precisely what makes this averaging operation exact.
Applied degreewise to $\Omega^\bullet(G)$, it is a chain projection onto the
subcomplex of left-invariant forms, and hence
\[
 H^\bullet(\Omega^\bullet(G)^G)
 \cong H^\bullet_{\dR}(G)^G.
\]

The schematic Cartan formula
$\mathcal L_x=d\,\iota_x+\iota_xd$, $x\in\g$,
shows that $\g$ acts trivially on $H^\bullet_{\dR}(G)$.  Since $G$ has height
one, the algebra $\mathcal O(G)^*$ is generated by $\g$, so the whole
$G$-action on de Rham cohomology is trivial.  Thus averaging acts as the
identity on cohomology.  Concretely, if $\omega$ is closed, then
$\operatorname{Av}(\omega)$ is a left-invariant closed form representing
$[\omega]$; and if a left-invariant form $\omega$ is $d\beta$, then
$\omega=d\operatorname{Av}(\beta)$.  Hence inclusion of left-invariant
forms is a quasi-isomorphism.  By
\eqref{eq:left-trivialization-forms}, its source is
$C^\bullet_{\CE}(\g)$, as desired.
\end{proof}

\section{Proof of the main theorem}\label{sec:proof-main}

By Proposition~\ref{prop:LR-invariantless}, $\g$ is invariantless; hence
Lemma~\ref{lem:finiteness} implies that $U(\g)$ and all complexes below are finite.

\subsection{The Koszul complex}

For an invariantless object
\[
 X=\bigoplus_{\substack{3\leq m\leq p-2\\m\text{ odd}}}X_m\otimes L_m
 \in\Ver_p^+,
\]
set
\[
\cK_n(X):=S X\otimes\wedge^nX.
\]
It is graded by the homological degree $n$ and by the diagonal degree, in
which both copies of $X$ have degree one, and we have the Koszul differential 
$d: \cK_n(X)\to \cK_{n-1}(X)$, giving $\cK_\bullet(X)$ the structure of a chain complex. This complex is called the {\it Koszul complex} of $X$ (\cite{Eti18}), and it has a natural structure of a graded commutative and cocommutative differential 
Hopf algebra.

Let $E_X$ be the graded vector
space $\bigoplus_m X_m^{(1)}$, with $X_m^{(1)}$ placed in degree $m$; for
$X=\g$ this is the grading denoted by
$E_\g=\bigoplus_m\g_m^{(1)}[m]$ in Theorem~\ref{thm:main-intro}.

\begin{proposition}\label{prop:koszul-general}
There is a canonical isomorphism of bigraded Hopf algebras
\[
 H_\bullet(\cK_\bullet(X))\cong\wedge^{\bullet,\bullet} E_X.
\]
The generators in $X_m^{(1)}$ have homological degree $m$, diagonal degree
$p$, and are primitive.
\end{proposition}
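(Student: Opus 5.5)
Because the Koszul complex takes direct sums to tensor products, the plan is to reduce to a single simple object and then apply the K\"unneth formula.

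\textbf{Step 1: reduction to $L_m$.} Choose a basis of each multiplicity space $X_m$, so that $X\cong\bigoplus_i L_{m_i}$. Then $S$ and $\wedge$ convert the direct sum into a graded tensor product, giving
\[
\cK_\bullet(X)\cong\bigotimes_i\cK_\bullet(L_{m_i})
\]
as bigraded differential Hopf algebras. The Koszul differential is the derivation extending the identity $\wedge^1X\to S^1X$, so it is compatible with this decomposition. Since tensor product is exact in $\Ver_p^+$, the K\"unneth formula gives
\[
H_\bullet(\cK(X))\cong\bigotimes_i H_\bullet(\cK(L_{m_i})).
\]
This is an isomorphism of bigraded Hopf algebras.

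\textbf{Step 2: the simple case.} For $3\le m\le p-2$ odd, we invoke \cite[Proposition~5.1]{Eti18}. It states that $H_\bullet(\cK(L_m))$ is $\one$ in bidegree $(0,0)$ plus a single copy of $\one$ in homological degree $m$ and diagonal degree $p$. Concretely, the class lives in $S^{p-m}L_m\otimes\wedge^mL_m$, where $\wedge^mL_m\cong\one$ and $S^{p-m}L_m\cong\one$ (the top degree). Call this class $\xi_m$. The class $\xi_m$ is primitive for degree reasons. Its coproduct lies in
\[
\bigoplus H_{a,b}\otimes H_{m-a,p-b},
\]
and the only nonzero bidegrees available are $(0,0)$ and $(m,p)$. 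Hence $\Delta\xi_m=\xi_m\otimes1+1\otimes\xi_m$. Also $\xi_m^2=0$: it has odd homological degree, and in any case the bidegree $(2m,2p)$ vanishes. Therefore $H_\bullet(\cK(L_m))\cong\wedge(\kk\xi_m)$ as bigraded Hopf algebras.

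\textbf{Step 3: canonicity and the Frobenius twist.} Combining the first two steps gives an abstract isomorphism with $\wedge E_X$. It remains to make it basis-independent. The primitive part of the homology in bidegree $(m,p)$ is a functor of $X$, depending only on the isotypic component $X_m\otimes L_m$. I would compute it as a functor of the multiplicity space $X_m$, i.e.\ of $\mathrm{GL}(X_m)$-representations. It comes from the subquotient of $S^{p-m}(X_m\otimes L_m)\otimes\wedge^m(X_m\otimes L_m)$ built out of the ``diagonal'' pieces $S^{p-m}L_m\otimes\wedge^mL_m\otimes(\text{degree } p \text{ in } X_m)$. This forces a polynomial functor of degree $p$ in $X_m$ whose dimension equals $\dim X_m$. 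Being additive, it must be $X_m^{(1)}$.

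To see the Frobenius twist explicitly, I would exhibit the map $X_m^{(1)}\to H$ sending $v\mapsto v^{\otimes p}$ paired with a fixed generator of $\Hom(\one,S^{p-m}L_m\otimes\wedge^mL_m)$. Its additivity on cycle classes should follow from the primitivity computed in Step 2: the cross terms $(v+w)$ minus $v$ minus $w$ are mixed products of positive-degree pieces from different copies, and these are boundaries or zero in homology by the K\"unneth decomposition. The map is $p$-semilinear in scalars. The primitives then generate freely by Step 1, giving
\[
\wedge E_X\xrightarrow{\sim}H_\bullet(\cK(X))
\]
canonically. I expect this canonicity / Frobenius-twist identification to be the main obstacle. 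The cleanest route is probably to verify additivity of $v\mapsto[\text{class}]$ by reducing to $X_m$ two-dimensional and using that mixed terms lie in tensor products of lower-degree homology, which vanishes in the relevant bidegree.
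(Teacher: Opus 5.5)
Your proposal is correct and follows the paper's proof: reduce to $L_m$ via the Künneth formula, use \cite[Proposition~5.1]{Eti18} for the simple case, get primitivity from the absence of homology in diagonal degrees strictly between $0$ and $p$, and identify the multiplicity space with $X_m^{(1)}$ by functoriality. Your Step~3 argument (additivity coming from Künneth compatibility with inclusions and projections, together with degree-$p$ homogeneity, so that scalars act by $\lambda^p$) is a more detailed version of the paper's one-line remark that a linear map with matrix $(a_{ij})$ acts on these classes by $(a_{ij}^p)$.
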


\begin{proof}
For $m$ odd, $3\leq m\leq p-2$, the calculation of the homology of the Koszul
complex in \cite{Eti18} gives
$H_0=\one, H_m=L_{p-1}^{\otimes(m+1)}=\one$
with no other homology, and the degree-$m$ class has diagonal degree $p$
(\cite[Proposition~5.1]{Eti18}).  This implies the statement for 
$X=L_m$.

Now consider the case of general $X$. Both $S$ and $\wedge$ take direct sums to graded tensor products, so the
K\"unneth formula gives an exterior generator for every copy of $L_m$ in
$X$.  Functoriality shows that a linear map with matrix $(a_{ij})$ acts on
these diagonal-degree-$p$ classes by $(a_{ij}^p)$; hence their multiplicity
space is canonically $X_m^{(1)}$ (cf. \cite[Corollary~5.2]{Eti18}). Finally, the reduced coproduct of such a
class vanishes, since there is no positive homology in diagonal degrees
strictly between $0$ and $p$.  Thus the generators are primitive.
\end{proof}

\subsection{The quantized Koszul complex}\label{sec:quantized-koszul}

We use homological grading: $\g[1]$ denotes $\g$ placed in degree one.
Set $\cQ_\bullet(\g):=U(\g)\ltimes S^\bullet (\g[1])=U(\g)\ltimes \wedge^\bullet \g$, where the smash product is formed using the adjoint action, and define the differential 
on $\cQ_\bullet(\g)$ by the schematic formulas
$d(x)=0,\ d(x[1])=x$. 
Equivalently, $\cQ_n(\g)=U(\g)\otimes\wedge^n\g$,
with the standard Chevalley-Eilenberg chain differential.

\begin{lemma}\label{lem:quantized-koszul}
The complex $\cQ_\bullet(\g)$ is a differential graded cocommutative Hopf
algebra.  For the filtration
\[
 F_s\cQ_n(\g):=
 F^{\mathrm{PBW}}_{s-n}U(\g)\otimes\wedge^n\g,
 \qquad F^{\mathrm{PBW}}_aU(\g)=0\quad(a<0),
\]
one has
\[
 \gr_F\cQ_\bullet(\g)\cong
 S\g\otimes\wedge^\bullet\g=\cK_\bullet(\g)
\]
as differential graded Hopf algebras.
\end{lemma}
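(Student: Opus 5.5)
We will build $\cQ_\bullet(\g)$ as a smash product of Hopf algebras and then check the differential and the filtration. First, $\wedge^\bullet\g=S^\bullet(\g[1])$ is a graded commutative and cocommutative Hopf algebra in the symmetric category of $\mathbb Z$-graded objects of $\Ind(\Ver_p^+)$, with $\g[1]$ primitive. By Lemma~\ref{lem:exterior-comparison}, $\wedge\g$ agrees with $\Lambda\g$, so this Hopf structure is unambiguous. The adjoint action makes $\wedge^\bullet\g$ a $U(\g)$-module Hopf algebra: $\g$ acts by derivations and coderivations, being primitive and acting on primitives.

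The standard smash product construction then gives $\cQ_\bullet(\g)=U(\g)\ltimes\wedge^\bullet\g$ the structure of a graded cocommutative Hopf algebra, with the tensor product coalgebra structure. Equivalently, $\cQ_\bullet(\g)$ is the enveloping algebra of the graded Lie superalgebra $\g\oplus\g[1]$, with bracket $[x,y[1]]=[x,y][1]$ and $[x[1],y[1]]=0$. We would take this as the definition, since it makes cocommutativity and the PBW-type identification $U(\g\oplus\g[1])\cong U(\g)\otimes\wedge\g$ transparent via \cite[Theorem~6.6]{Eti18} for the operadic Lie algebra in $\Ver_p^+\boxtimes\mathrm{sVec}$.

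For the differential, define $\delta$ on generators by $\delta(x)=0$ and $\delta(x[1])=x$. We then check the following.
\begin{enumerate}[label=(\alph*)]
\item $\delta$ is a superderivation of the Lie superalgebra $\g\oplus\g[1]$. The only nontrivial identity is $\delta[x,y[1]]=[x,y]=[x,\delta(y[1])]$, and $\delta[x[1],y[1]]=[x,y]-[x,y]=0$ by the sign rule.
\item Hence $\delta$ extends uniquely to an odd derivation of $U(\g\oplus\g[1])$. Since it maps primitives to primitives, it is also a coderivation, and it satisfies $\delta^2=0$ because $\delta^2$ is an even derivation vanishing on generators.
\end{enumerate}
Writing an element of $U(\g)\otimes\wedge^n\g$ as $u\cdot x_1[1]\cdots x_n[1]$ and applying the Leibniz rule, then moving each $x_i$ to the left past $x_j[1]$ with commutator terms $[x_i,x_j][1]$, recovers exactly the Chevalley--Eilenberg chain differential with coefficients in $U(\g)$. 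This is the one routine but sign-sensitive computation.

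For the filtration, note that $F_s$ is the filtration induced by giving $\g$ and $\g[1]$ both filtration degree $1$. Indeed, an element of $F^{\mathrm{PBW}}_{s-n}U(\g)\otimes\wedge^n\g$ has total degree $\le s$. This is an algebra and coalgebra filtration, and $\delta$ preserves it, since it sends degree-$1$ generators to degree-$1$ generators. On the associated graded, brackets $[x,y]$ drop filtration degree, so $\gr$ is the enveloping algebra of the abelian superalgebra, namely $S\g\otimes\wedge\g$. The map $S\g\to\gr U(\g)$ is an isomorphism by PBW. The induced differential is still $x[1]\mapsto x$ and is a derivation, so it is the Koszul differential, and the Hopf structures match because generators remain primitive.

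The main obstacle is making sure that smash products and enveloping algebras of Lie superalgebras behave in $\Ver_p^+\boxtimes\mathrm{sVec}$: specifically, that PBW holds for $\g\oplus\g[1]$ and that no $p$-th power phenomena spoil the derivation argument. If invoking \cite{Eti18} for superalgebras is problematic, we would instead verify directly that the smash product multiplication on $U(\g)\otimes\wedge\g$ is associative and that $\delta$ is a derivation by checking the Leibniz rule on generators. In that fallback the Hopf structures are built by hand rather than obtained from the enveloping-algebra universal property.
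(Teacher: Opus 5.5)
Your proposal is correct and follows essentially the same route as the paper. You present $\cQ_\bullet(\g)$ as the smash product generated by primitive elements $x$ and $x[1]$ (equivalently $U(\g\ltimes\g[1])$), and check that $d(x)=0$, $d(x[1])=x$ respects the relations, giving a derivation and coderivation. You then use PBW for $\g$, with the filtration in which both $x$ and $x[1]$ have degree one, to identify the associated graded with the Koszul complex.

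One small remark: you do not need \cite[Theorem~6.6]{Eti18} for the superalgebra $\g\oplus\g[1]$. Your smash-product construction together with the universal property already gives $U(\g\oplus\g[1])\cong U(\g)\otimes\wedge\g$, which is exactly the paper's construction. This matters because the paper's $L_2$-free justification for that theorem would not apply verbatim if $\g$ contains $L_{p-2}$, since $L_{p-2}\otimes L_{p-1}\cong L_2$.
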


\begin{proof}
The smash product is generated by primitive elements $x$ and $x[1]$, with schematic
relations
\[
 x\,y[1]-y[1]x=[x,y][1],
 \qquad x[1]y[1]+y[1]x[1]=0.
\]
The assignments $d(x)=0$ and $d(x[1])=x$ preserve these relations and define
a derivation and coderivation; expanding it gives the usual
Chevalley-Eilenberg differential.  In the displayed filtration, the action
part of the differential preserves filtration and the bracket part lowers it
by one.  PBW therefore identifies the associated graded differential with
the ordinary Koszul differential.
\end{proof}

We call the complex $\cQ_\bullet(\g)$ the {\it quantized Koszul complex} of $\g$. For every finite $\g$-module $V$, 
$\uHom_{U(\g)}(\cQ_\bullet(\g),V)
 \cong\wedge^\bullet\g^*\otimes V$
is the Chevalley-Eilenberg cochain complex of $\g$. 

\subsection{Homology of the quantized Koszul complex}

\begin{proposition}\label{prop:degeneration}
The spectral sequence of the filtration in
Lemma~\ref{lem:quantized-koszul} degenerates at $E_2$ (where $E_1$ is the associated graded complex and $E_2$ is its homology), and
\[
 \gr_F H_\bullet(\cQ_\bullet(\g))\cong\wedge^{\bullet,\bullet} E_\g
\]
as bigraded Hopf algebras.
\end{proposition}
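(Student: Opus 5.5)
The plan is to run the spectral sequence of the filtered complex $(\cQ_\bullet(\g),F)$ from Lemma~\ref{lem:quantized-koszul} and show that all higher differentials vanish for degree reasons, using multiplicativity. First, the filtration is finite and exhaustive in each homological degree, so the spectral sequence converges to $\gr_F H_\bullet(\cQ_\bullet(\g))$. Indeed, $\g$ is invariantless by Proposition~\ref{prop:LR-invariantless}, so $U(\g)$ and $\wedge\g$ are finite by Lemma~\ref{lem:finiteness}, and the PBW filtration on $U(\g)$ is finite. By Lemma~\ref{lem:quantized-koszul}, the $E_1$ page is the Koszul complex $\cK_\bullet(\g)$ with the Koszul differential. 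Hence, by Proposition~\ref{prop:koszul-general}, the $E_2$ page is $\wedge^{\bullet,\bullet}E_\g$. It is generated by primitive classes in $\g_m^{(1)}$ of bidegree (homological, diagonal) $=(m,p)$, with $m$ odd and $3\le m\le p-2$.

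Next I will check that the spectral sequence is one of bigraded Hopf algebras. The filtration $F$ is multiplicative, $F_sF_t\subset F_{s+t}$, because the PBW filtration is multiplicative and $\wedge^n\g$ sits in filtration degree $n$. The coproduct also respects $F$, in the sense that $\Delta F_s\subset\sum_{a+b=s}F_a\otimes F_b$, since the generators $x$ and $x[1]$ are primitive and lie in filtration degrees $1$. Since $\otimes$ is exact in $\operatorname{Ind}(\Ver_p^+)$, the usual argument (Künneth on each page) makes every page $E_r$ a differential bigraded Hopf algebra, and each $d_r$ is a derivation and a coderivation. I expect this bookkeeping in the categorical setting to be the only technical point, but it should be routine.

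The key degree argument is as follows. For $r\ge2$, the differential $d_r$ lowers homological degree by $1$ and lowers diagonal (filtration) degree by $r-1\ge1$, so it strictly lowers diagonal degree. Consider a generator $\xi\in\g_m^{(1)}$ of bidegree $(m,p)$. Then $d_r\xi$ has bidegree $(m-1,\,p-r+1)$ with diagonal degree $<p$. In $\wedge E_\g$, the only nonzero component of diagonal degree $<p$ is the unit, in bidegree $(0,0)$. Since $m-1\ge2\neq0$, this forces $d_r\xi=0$ on $E_2$. By the Leibniz rule $d_2$ vanishes on the algebra generated by these classes, which is all of $E_2$, so $E_3=E_2$. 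Inductively, the same argument applies on every page, so $d_r=0$ for all $r\ge2$ and $E_\infty=E_2$.

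Finally, convergence gives $\gr_F H_\bullet(\cQ_\bullet(\g))\cong E_\infty=\wedge^{\bullet,\bullet}E_\g$. This is an isomorphism of bigraded Hopf algebras, because the induced filtration on $H_\bullet(\cQ_\bullet(\g))$ is multiplicative and comultiplicative and the identification $E_\infty\cong\gr_F H$ respects these structures.
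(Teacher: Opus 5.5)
Your proposal is correct and follows the paper's argument. You identify the $E_2$ page with $\wedge^{\bullet,\bullet}E_\g$ via Proposition~\ref{prop:koszul-general}, note that for $r\geq2$ the differential $d_r$ sends a generator of bidegree $(m,p)$ to homological degree $m-1\geq2$ and diagonal degree $p-r+1<p$, where the page vanishes, and then conclude by the Leibniz rule that every $d_r=0$. Your extra bookkeeping on the multiplicativity and comultiplicativity of the filtration spells out what the paper uses implicitly when it treats $d_r$ as a derivation and states the identification as one of bigraded Hopf algebras.
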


\begin{proof}
We have
\[
 E_2^{s,n}=H_n(\cK_\bullet(\g))_s
 \Longrightarrow H_n(\cQ_\bullet(\g)),
 \qquad
 d_r:E_r^{s,n}\longrightarrow E_r^{s-r+1,n-1}.
\]
By Proposition~\ref{prop:koszul-general}, $E_2$ is generated by classes of
bidegree $(s,n)=(p,m)$, with $m\geq3$ odd.  Inductively, $d_r$ is a
derivation on the exterior algebra generated by the surviving classes.  On a
generator its target has diagonal degree $p-r+1$: this degree is absent for
$2\leq r\le p$, has positive homological degree and diagonal degree zero for
$r=p+1$, and is negative for $r>p+1$.  Hence every $d_r$ vanishes, so
$E_2=E_\infty$.
\end{proof}

We will use the following classical lemma, which is the purely odd case of the Cartier--Kostant--Milnor--Moore theorem (see e.g. \cite[Theorem~3.9]{Mas12} and \cite[Proposition~3.4]{MS17}).
While the usual Milnor--Moore theorem requires characteristic zero, in the purely odd case it is valid in every characteristic different from $2$.

\begin{lemma}\label{lem:hopf-rigidity}
Let $H$ be a connected, nonnegatively graded cocommutative Hopf algebra in a
semisimple symmetric tensor category over a field of characteristic different from two,
with a finite Hopf filtration.  If
$\gr_FH\cong\wedge^\bullet E$
where $E$ is an ordinary vector space concentrated in odd homological degrees
and filtration degree $p$, then the generators of $E$ have unique primitive
lifts and these induce an isomorphism $H\cong\wedge^\bullet E$
of filtered graded Hopf algebras.
\end{lemma}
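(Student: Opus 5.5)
The argument has three parts: lift each generator of $E$ to a primitive of $H$, show the lift is unique, and then show that the resulting algebra map $\wedge^\bullet E\to H$ is an isomorphism.

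\emph{Setup.} Let $P(H)\subset H$ be the primitives. Each element $e\in E$ sits in filtration degree $p$ and odd homological degree $m$. Since $H$ is graded and $E$ lives in a single homological degree, I work one homological degree at a time. Fix a homogeneous lift $\tilde e\in F_pH$ of $e$. I will correct $\tilde e$ to a primitive. The reduced coproduct $\bar\Delta(\tilde e)$ lies in $\bigoplus_{0<a<m}H_a\otimes H_{m-a}$. Because $\gr_F\bar\Delta(e)=0$, it also lies in strictly lower filtration, i.e.\ in $\sum_{i+j<p}F_iH\otimes F_jH$. Moreover, $\gr_FH$ has nothing in positive homological degree and filtration degree below $p$. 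So in filtration $<p$, the positive-degree part of $H$ vanishes, since its associated graded vanishes and the filtration is finite. The only remaining terms would need both tensor factors of positive homological degree and total filtration $<p$, which is impossible. Hence $\bar\Delta(\tilde e)=0$ and $\tilde e$ is already primitive.

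\emph{Uniqueness.} Two lifts differ by an element of $F_{p-1}H$ in positive homological degree. By the vanishing just established, that difference is zero. So the lift is unique, and indeed canonical; uniqueness then also gives compatibility with the ambient category structure, since the lift is natural in the multiplicity space. The same vanishing shows that $P(H)$ in positive degrees maps injectively to $\gr_F$, and hence $P(H)\cong E$ by dimension count in $\gr$.

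\emph{The map $\wedge^\bullet E\to H$.} The lifts are odd primitive elements. For odd primitive $x$, the element $x^2=\tfrac12[x,x]$ is primitive: $\Delta(x^2)=x^2\otimes1+1\otimes x^2$, using the super sign and $\operatorname{char}\neq2$. Likewise the graded commutator $xy+yx$ of odd primitives $x,y$ is primitive. These elements lie in even homological degree, and all positive-degree primitives lie in $E$, which is concentrated in odd degrees. Therefore $xy+yx=0$ for all lifted generators. This gives an algebra map $\wedge^\bullet E\to H$, which is a Hopf map because it sends generators to primitives. It is compatible with the filtrations when $\wedge^kE$ is given filtration $kp$. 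Its associated graded is the given isomorphism $\wedge^\bullet E\cong\gr_FH$, so by finiteness of the filtration it is an isomorphism of filtered graded Hopf algebras.

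The main obstacle is the first step: producing a primitive lift and controlling the correction terms. I expect this to be resolved by the vanishing of $\gr_FH$ in positive homological degrees below filtration $p$, as above. If the filtration conventions for $H\otimes H$ turn out subtler than assumed, I would instead run the standard inductive argument on homological degree, correcting $\tilde e$ by a coboundary in the cobar complex and using that $\wedge E$ has primitive-generated cohomology.
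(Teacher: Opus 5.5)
Your argument is the paper's proof, step for step, but one step is mis-justified. The shared steps are these: the lifts are unique and primitive because $F_{p-1}H$ has nothing in positive homological degree; graded commutators of the odd primitive lifts are primitive of even degree and therefore vanish; and the resulting filtered Hopf map $\wedge^\bullet E\to H$ has bijective associated graded.

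The step that fails as written is your claim that every positive-degree primitive of $H$ lies in the span of the lifts. The vanishing of $F_{p-1}H$ in positive degrees says nothing about a primitive whose leading term lies in filtration $2p$ or higher. A ``dimension count in $\gr$'' only bounds $\dim P(H)_{>0}$ by $\dim P(\gr_FH)_{>0}$. Two facts are missing, and the paper applies both directly to the leading term of $xy+yx$. First, the symbol of a primitive element of $H$ is primitive in $\gr_FH\cong\wedge^\bullet E$. Second, $P(\wedge^\bullet E)=E$: the $E\otimes\wedge^{n-1}E$-component of the coproduct is injective on $\wedge^nE$ for $n\geq1$, so there are no primitives in exterior degree $\geq2$.

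You can also bypass primitives at this point. The elements $xy+yx$ and $x^2$ lie in $F_{2p}H$, and their symbols $\bar x\bar y+\bar y\bar x$ and $\bar x^2$ vanish in $\wedge^2E$. Hence they lie in $F_{2p-1}H$, which has nothing in even positive homological degree.

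Finally, you never use semisimplicity of the ambient category. The paper uses it at the outset to split the filtration and conclude that $H$ is a direct sum of copies of $\one$. That is what makes your elementwise super-sign computations, such as primitivity of $x^2$ and $xy+yx$, legitimate in a general symmetric tensor category.
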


\begin{proof}
Since $\gr_FH$ is a direct sum of copies of $\one$ and the finite filtration splits in the semisimple ambient category, $H$ is also a direct sum of copies of $\one$. There is no positive homological degree in filtration below $p$, so every
generator of $E$ has a unique lift, and its reduced coproduct is zero for
filtration reasons.  The graded commutator of two such primitive lifts is
primitive and has filtration degree at most $2p-1$.  Its leading term, if nonzero,
would be a primitive element of $\wedge^\bullet E$ in even homological degree.  But
the primitive subspace of $\wedge^\bullet E$ is $E$, which is concentrated in odd
degrees.  Thus the lifts graded-commute and square to zero.  They define a
filtered Hopf map $\wedge^\bullet E\to H$ whose associated graded is an
isomorphism.
\end{proof}

\begin{proposition}\label{prop:Q-homology}
There is a canonical isomorphism of graded Hopf algebras
$
 H_\bullet(\cQ_\bullet(\g))\cong\wedge^\bullet E_\g,
$
and the left $U(\g)$-action on this homology is trivial.
\end{proposition}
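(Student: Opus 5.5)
We combine Proposition~\ref{prop:degeneration} with Lemma~\ref{lem:hopf-rigidity}. Set $H:=H_\bullet(\cQ_\bullet(\g))$.

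First we check that $H$ satisfies the hypotheses of Lemma~\ref{lem:hopf-rigidity}. By Lemma~\ref{lem:quantized-koszul}, $\cQ_\bullet(\g)$ is a differential graded cocommutative Hopf algebra. Since tensor product is exact in $\Ver_p^+$, the Künneth map identifies $H_\bullet(\cQ\otimes\cQ)$ with $H\otimes H$. Hence $H$ is a nonnegatively graded cocommutative Hopf algebra in $\Ver_p^+$, which is semisimple and has $p\ge5$.

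Connectedness in homological degree $0$ comes from $H_0=U(\g)/U(\g)\g=\one$, using the augmentation. The filtration $F$ induces a Hopf filtration on $H$. This filtration is finite because $\cQ_\bullet(\g)$ is finite: $U(\g)$ is finite by Proposition~\ref{prop:LR-invariantless} and Lemma~\ref{lem:finiteness}. By Proposition~\ref{prop:degeneration}, $\gr_F H\cong\wedge^\bullet E_\g$ as bigraded Hopf algebras. Here $E_\g$ sits in odd homological degrees $m\ge3$ and in filtration degree $p$ (Proposition~\ref{prop:koszul-general}).

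Lemma~\ref{lem:hopf-rigidity} then gives unique primitive lifts of the generators. These lifts yield a filtered graded Hopf isomorphism $\wedge^\bullet E_\g\xrightarrow{\sim}H$. Canonicity comes from the uniqueness of the lifts, together with the canonical identification of $E_2$ with $\wedge E_\g$ (Frobenius-twisted multiplicity spaces) in Proposition~\ref{prop:koszul-general}.

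For the triviality of the left $U(\g)$-action, the natural tool is the smash product structure. In $\cQ_\bullet(\g)$ we have $d(x[1])=x$, so for $x\in\g$ and a cycle $c$,
\[
 x\,c = d(x[1])\,c = d\bigl(x[1]\,c\bigr)+x[1]\,dc = d\bigl(x[1]\,c\bigr).
\]
This uses that $d$ is a derivation and that $x[1]$ has odd degree, so the sign is absorbed. Thus $xc$ is a boundary, and $\g$ acts by zero on $H$. Since $U(\g)$ is generated by $\g$, the whole action factors through the counit.

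To make this rigorous we phrase it categorically. The left multiplication map $\g\otimes\cQ\to\cQ$ is the composite of $d\otimes 1$ with multiplication $\g[1]\otimes\cQ\to\cQ$. The Leibniz rule then gives a chain homotopy $h=\mu\circ(\iota\otimes 1)$ between the action map $\g\otimes\cQ_\bullet\to\cQ_\bullet$ and zero, so the action map is null-homotopic. We therefore do not need schematic elements.

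The main obstacle is the bookkeeping needed to verify that Lemma~\ref{lem:hopf-rigidity} applies. Two points need care. First, the filtration on homology must be compatible with the coproduct; this follows because $F$ is a Hopf filtration on $\cQ_\bullet(\g)$ and Künneth respects filtrations. Second, $E_\infty=\gr_F H$ must hold as Hopf algebras, not merely as vector spaces. This holds because the spectral sequence is one of Hopf algebras, and semisimplicity of $\Ver_p^+$ splits the filtrations.
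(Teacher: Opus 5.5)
Your proposal is correct and is essentially the paper's proof. The Hopf algebra isomorphism is obtained, as in the paper, by feeding Proposition~\ref{prop:degeneration} into Lemma~\ref{lem:hopf-rigidity}, and the triviality of the action comes from the null-homotopy $h(x\otimes q)=x[1]q$ satisfying $dh+h(1\otimes d)=\mu_{\g,\cQ}$, which you derive, as the paper does, from the Leibniz rule and $d(x[1])=x$. Your extra checks of the lemma's hypotheses (connectedness via $H_0=U(\g)/U(\g)\g=\one$, finiteness and Hopf-compatibility of the filtration, K\"unneth) only make explicit what the paper leaves implicit.
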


\begin{proof}
The first assertion follows from Proposition~\ref{prop:degeneration} and
Lemma~\ref{lem:hopf-rigidity}. To prove the second assertion,  
define morphisms $h$ and $\mu_{\g,\cQ}$ by schematic formulas
\[
 h(x\otimes q):=x[1]q,
 \qquad \mu_{\g,\cQ}(x\otimes q):=xq
\]
for $x\in\g$ and homogeneous $q\in\cQ_n(\g)$.
Since $d$ is a derivation and $d(x[1])=x$, one has
\[
 d(x[1]q)+x[1]d(q)=xq.
\]
Thus $dh+h(1\otimes d)=\mu_{\g,\cQ}$, so multiplication by 
$\g$ is null-homotopic.
 Thus the action of $\g$ on homology is zero, and the $U(\g)$-action factors
through the augmentation.
\end{proof}

\subsection{End of proof of Theorem~\ref{thm:main-intro}}

Since $\Rep(\g)$ is semisimple, the functor $\uHom_{U(\g)}(-,V)$ is exact. 
Hence Proposition~\ref{prop:Q-homology} gives, naturally in $V$,
$$
 H^\bullet_{\CE}(\g,V)\cong
H^\bullet\uHom_{U(\g)}(\cQ_\bullet(\g),V)\cong 
\uHom_{U(\g)}(\wedge^\bullet E_\g,V)\cong(\wedge^\bullet E_\g)^*\otimes V^\g
 \cong\wedge^\bullet E_\g^*\otimes V^\g.
$$
The usual cup product on
$C^\bullet_{\CE}(\g)
 =\uHom_{U(\g)}(\cQ_\bullet(\g),\one)$
is the convolution product induced by the coproduct of the differential graded
Hopf algebra $\cQ_\bullet(\g)$.  Since all objects involved are finite and
$\Rep(\g)$ is semisimple, the canonical isomorphism
\[
 H^\bullet_{\CE}(\g)
 \cong \uHom_{U(\g)}(H_\bullet(\cQ_\bullet(\g)),\one)
 \cong H_\bullet(\cQ_\bullet(\g))^*
\]
identifies this cup product with the product dual to the coproduct on
$H_\bullet(\cQ_\bullet(\g))$.  We equip $H^\bullet_{\CE}(\g)$ with the
coproduct dual to multiplication on $H_\bullet(\cQ_\bullet(\g))$.  With this
convention the above identification is an isomorphism of graded Hopf algebras,
and Proposition~\ref{prop:Q-homology} gives
\[
 H^\bullet_{\CE}(\g)\cong\wedge^\bullet E_\g^*
\]
as graded Hopf algebras. 

Now, $C_\bullet^{\CE}(\g)=\wedge^\bullet\g$ is the
graded dual of $C^\bullet_{\CE}(\g)=\wedge^\bullet\g^*$.  Hence
\[
 H_\bullet^{\CE}(\g)\cong H^\bullet_{\CE}(\g)^*
 \cong\wedge^\bullet E_\g.
\]

We now identify these spaces with invariant tensors.  If
$\alpha\in(\wedge^n\g^*)^\g$, then alternating the identity
$x\cdot\alpha=0$ in $n+1$ variables gives
$2d_{\CE}\alpha=0$.  Since $p\neq2$, every invariant cochain is therefore a
cocycle.  The $\g$-actions on the Chevalley--Eilenberg cochain and chain
complexes are null-homotopic by the Cartan homotopy formulas, so they act
trivially on cohomology and homology.  Since $\Rep(\g)$ is semisimple, the
invariants functor is exact;
therefore
\[
 H^\bullet\bigl((\wedge^\bullet\g^*)^\g\bigr)
 \cong H^\bullet_{\CE}(\g)^\g
 =H^\bullet_{\CE}(\g).
\]
The differential on the complex on the left is zero, proving
$(\wedge^\bullet\g^*)^\g\cong H^\bullet_{\CE}(\g)$. Also the invariant
subspaces of $\wedge^n\g^*$ and $\wedge^n\g$ are in perfect duality (Subsection \ref{subsec:exterior-conventions}), hence the boundary map vanishes on
$(\wedge^\bullet\g)^\g$, and the same exact-invariants argument for the chain
complex gives
\[
 (\wedge^\bullet\g)^\g\cong H_\bullet^{\CE}(\g).
\]

If $V$ is simple and
$V^\g\neq0$, then $\g$ acts trivially on $V$; hence the cohomology of a simple
module with nonzero action vanishes.  The Poincar\'e polynomial follows from
the exterior-algebra formula.

It remains to identify the algebraically defined coproduct on $H^\bullet_{\CE}(\g)$ with the coproduct transported from de Rham cohomology in Proposition~\ref{prop:dR-CE}.
Let $G=\exp(\g)$.  All objects involved are finite.  Dualizing the
left-translation trivialization~\eqref{eq:left-trivialization-forms}, with the
Chevalley--Eilenberg differential displayed above, gives an isomorphism of
differential graded Hopf algebras
\[
 \Omega^\bullet(G)^*
 \cong U(\g)\ltimes\wedge^\bullet \g
 =\cQ_\bullet(\g).
\]
Under this identification, wedge product of differential forms is dual to
the coproduct of $\cQ_\bullet(\g)$, and pullback by multiplication on $G$ is
dual to its product.  Thus the coproduct on $H^\bullet_{\CE}(\g)$ obtained
above is dual to multiplication on $H_\bullet(\cQ_\bullet(\g))$.

This completes the proof of Theorem~\ref{thm:main-intro}.

\begin{remark}\label{rem:derived-invariants}
Semisimplicity gives $\Ext^i_{\Rep(\g)}(\one,V)=0$ for $i>0$.  This does not
contradict Theorem~\ref{thm:main-intro}: the augmentation
$\cQ_\bullet(\g)\to\one$ is not a resolution, since its homology is
$\wedge^\bullet E_\g$.  This is precisely the almost-Koszul defect of
$\g$.
\end{remark}

\section{Examples}\label{sec:examples}

Examples of linearly reductive Lie algebras in $\Ver_p^+$
are discussed in \cite[Section~4]{CEO24}. Namely, 
let $Q$ be a connected Dynkin
diagram, let $G_Q^{\rm ad}$ be the adjoint group of type $Q$, and write
$\operatorname{Tilt}(G_Q^{\rm ad})$ for its category of tilting modules.  We
use the notation
\[
 \mathrm{SS}:\operatorname{Tilt}(G_Q^{\rm ad})
 \longrightarrow \Ver_p(G_Q^{\rm ad})
\]
for the quotient by negligible morphisms, where $\Ver_p(G_Q^{\rm ad})$ 
is the Verlinde category of $G_Q^{\rm ad}$ (\cite[Section~6]{EO22}), denoted
$\Ver_p^+(G_Q)$ in \cite[Section~4.1]{CEO24}.  This is the semisimplification
functor of \cite[Definition~2.5 and Theorem~2.6]{EO22}.

Let $Q$ have rank $r$ and exponents
$1=m_1\leq\cdots\leq m_r=h_Q-1$, where $h_Q$ is its Coxeter number.  For
$p>h_Q$, let $\g_{Q,\kk}$ be the simple Lie algebra over $\kk$ attached to
$Q$.  Under the forgetful functor
$\Ver_p(G_Q^{\rm ad})\to\Ver_p^+$, the object
$\mathrm{SS}(\g_{Q,\kk})$ is the Lie algebra $\g_{Q,p}$.  It is linearly
reductive by \cite[Section~4]{CEO24}.  Indeed, the height-one group scheme
$G_{Q,p}:=\exp(\g_{Q,p})$ satisfies
\[
 \mathcal O(G_{Q,p})^*=U(\g_{Q,p}),
\]
so representations of $G_{Q,p}$ are precisely finite
$U(\g_{Q,p})$-modules in $\Ver_p^+$.  Thus the linear reductivity of
$G_{Q,p}$ established in \cite[Section~4]{CEO24} is exactly the linear
reductivity of $\g_{Q,p}$ in the sense of
Definition~\ref{def:linear-reductive}.
 
The simple summands of $\g_{Q,p}$ are obtained from the multiset $\{2m_i+1\}$ by deleting $p$
and deleting every $a>p$ together with $2p-a$ (folding); see
\cite[Subsection~4.2]{CEO24}.  If $d_1,\dots,d_s$ are the remaining labels,
then
$\g_{Q,p}\cong\bigoplus_{j=1}^sL_{d_j}$ as an object of $\Ver_p^+$; we call $s$ the {\it rank} of $\g_{Q,p}$. 
Consequently, Theorem~\ref{thm:main-intro} gives an isomorphism of graded Hopf algebras
 $$
 H^\bullet_{\CE}(\g_{Q,p})
 \cong\wedge^\bullet(\xi_1,\dots,\xi_s),
 \quad |\xi_j|=d_j.
$$
For example, if $Q=A_r$ and $p>r+1$, then
$$
 H^\bullet_{\CE}(\g_{A_r,p})
 \cong\wedge^\bullet(\xi_3,\xi_5,\dots,\xi_{2s+1}),
$$
where $s=\min(r,p-r-2)$.

If $p>2h_Q$, no deletion or folding occurs in the principal decomposition,
so
\begin{equation}\label{eq:stable-verlinde}
 \g_{Q,p}\cong\bigoplus_{i=1}^rL_{2m_i+1},
 \qquad
 H^\bullet_{\CE}(\g_{Q,p})
 \cong\wedge^\bullet(\xi_1,\dots,\xi_r),
 \quad |\xi_i|=2m_i+1.
\end{equation}

In particular, consider the examples in \cite[Subsection~4.3]{CEO24} 
with $s=2$. Such examples are of the form 
$\g=L_3\oplus L_d$, and hence
\[
 H^\bullet_{\CE}(\g)\cong\wedge^\bullet(\xi_3,\xi_d).
\]
The list is
\[
\begin{array}{lll}
\text{type} & \text{range} & H^\bullet_{\CE}(\g)\\[2pt]
A_2 & p\geq7 & \wedge^\bullet(\xi_3,\xi_5)\\
B_2=C_2 & p\geq11 & \wedge^\bullet(\xi_3,\xi_7)\\
D_2^* & p\geq11 & \wedge^\bullet(\xi_3,\xi_{p-2})\\
G_2 & p\geq17 & \wedge^\bullet(\xi_3,\xi_{11})\\
E_2^* & p=23 & \wedge^\bullet(\xi_3,\xi_{15})\\
E_2^{**} & p=37 & \wedge^\bullet(\xi_3,\xi_{23}).
\end{array}
\]
where $D_2^*=\g_{D_{\frac{p-1}{2}},p}$, $E_2^*=\g_{E_7,23}$, $E_2^{**}=\g_{E_8,37}$. 
In each case the Poincar\'e polynomial is $(1+t^3)(1+t^d)$.

\begin{remark}\label{rem:simple-reduction} Any linearly reductive Lie algebra $\g$ in $\Ver_p^+$ is semisimple, i.e., is a direct sum of simple Lie algebras.
Indeed, decompose the adjoint $\g$-module as
$\g=\bigoplus_i\g_i$ with $\g_i$ simple.  Each $\g_i$ is an ideal, and for
$i\ne j$ one has $[\g_i,\g_j]\subseteq\g_i\cap\g_j=0$.  Thus each $\g_i$ is
either abelian or simple as a Lie algebra.  The abelian case is impossible:
by Lemma~\ref{subal} such a summand would be linearly reductive, whereas, if
$I\subset S\g_i=U(\g_i)$ is the augmentation ideal, then
$U(\g_i)/I^2$ is a nonsplit extension of $\one$ by $\g_i$.

So the problem of computing the (co)homology of $\g$ reduces to the case of simple linearly reductive Lie algebras. It is conjectured in \cite{CEO24}, Conjecture 4.1 that any such Lie algebra is of the form $\g_{Q,p}$ 
for $p>h_Q$. However, Joe Newton (\cite{Ne}) recently established a counterexample 
to this conjecture: the subobject $L_3\oplus L_{15}\oplus L_{27}$ in the linearly reductive Lie algebra $\mathfrak{sl}(L_{15})=\g_{A_{14},29}$ is a Lie subalgebra, hence is linearly reductive by Lemma \ref{subal}, but it does not appear in the list of \cite[Subsection~4.3]{CEO24} (it is obtained by semisimplifying 
$E_8$ in characteristic $29$ with respect to a subregular unipotent). 
At the moment, this is the only known counterexample.
At the same time, in loc. cit. J. Newton showed that the conjecture holds for rank $2$ Lie algebras (thereby establishing Conjecture 4.4 of \cite{CEO24}), and also for rank $3$ Lie algebras if we add the above counterexample.
\end{remark} 

\section{Applications to ordinary Lie algebra cohomology}
\label{sec:ordinary-cohomology}

Theorem \ref{thm:main-intro} implies classical results on cohomology of ordinary semisimple Lie algebras. 

\begin{corollary}\label{cor:ordinary-naive-bound}
Assume that $\operatorname{char}(\kk)=p>\dim\g_{Q,\kk}$.  Then
\[
 H^\bullet_{\CE}(\g_{Q,\kk},\kk)
 \cong\wedge^\bullet(\eta_1,\dots,\eta_r),
 \qquad |\eta_i|=2m_i+1,
\]
as graded algebras. The same statement holds in characteristic zero. 
\end{corollary}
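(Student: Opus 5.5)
The plan is to deduce the ordinary statement from Theorem~\ref{thm:main-intro} by semisimplification with respect to a principal unipotent element, in the regime where nothing collapses. Since $p>\dim\g_{Q,\kk}\geq 2h_Q$ (indeed $\dim\g=r(h_Q+1)$, so $p>2h_Q$ except perhaps in rank one, where $\dim\mathfrak{sl}_2=3<p$ and $2h=4<p$ because $p\geq5$), we are in the stable range (\ref{eq:stable-verlinde}). Thus $\g_{Q,p}=\mathrm{SS}(\g_{Q,\kk})\cong\bigoplus_i L_{2m_i+1}$, and Theorem~\ref{thm:main-intro} gives $H^\bullet_{\CE}(\g_{Q,p})\cong\wedge^\bullet(\xi_1,\dots,\xi_r)$ with $|\xi_i|=2m_i+1$.

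Next I compare $C^\bullet_{\CE}(\g_{Q,\kk})=\wedge^\bullet\g_{Q,\kk}^*$ with $C^\bullet_{\CE}(\g_{Q,p})$. The hypothesis $p>\dim\g_{Q,\kk}$ is used here: every exterior power $\wedge^n\g_{Q,\kk}$ has $n\leq\dim\g<p$, so it is a direct summand of $\g_{Q,\kk}^{\otimes n}$ cut out by the idempotent $e_n^-/n!$. Hence $\wedge^n\g_{Q,\kk}$ is a tilting module for $\mathbb Z/p$ (a direct summand of a tensor power of tiltings), and the semisimplification functor, being symmetric monoidal on tilting objects, sends $\wedge^n\g_{Q,\kk}$ to $\wedge^n\g_{Q,p}$ (using Lemma~\ref{lem:exterior-comparison} for the identification). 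Since $\mathrm{SS}$ is additive, it sends the CE complex of $\g_{Q,\kk}$, as a complex of $\mathbb Z/p$-modules, to the CE complex of $\g_{Q,p}$. However, $\mathrm{SS}$ is not exact, so I will not compare cohomologies directly. Instead I use the invariant-cochain description. In characteristic $0$, or for $p>\dim\g$ where reductivity arguments hold, invariant cochains compute cohomology, and the Borel–Chevalley description is $(\wedge\g^*)^{\g}$ as an exterior algebra.

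The cleaner route is to use the dimension count together with a lower bound. The main step, which I expect to be the obstacle, is to relate $(\wedge^n\g_{Q,\kk}^*)^{G_Q}$ with $\Hom(\one,\wedge^n\g_{Q,p}^*)$ computed in $\Ver_p(G_Q^{\rm ad})$. For $p>\dim\g$, the module $\wedge^n\g^*_{Q,\kk}$ for $G_Q$ should be tilting with all weights in the closed fundamental alcove or in a linked region, so that $\mathrm{SS}$ preserves the multiplicity of the trivial module. I would first try the linkage principle: the highest weights of $\wedge^n\g$ are at most $2\rho$, and $\langle 2\rho,\alpha_0^\vee\rangle=2(h-1)<p$. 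So all composition factors lie in the lowest alcove, the module is semisimple tilting, and $\mathrm{SS}$ is faithful on it. This gives $\dim H^n_{\CE}(\g_{Q,\kk})\ge\dim(\wedge^n\g^*)^{G}=\dim H^n_{\CE}(\g_{Q,p})$ after applying the forgetful functor to $\Ver_p^+$. Then I would show that $(\wedge\g^*_{Q,\kk})^{\g}$ maps isomorphically onto cohomology, by the same argument as at the end of Section~\ref{sec:proof-main}: invariant cochains are cocycles since $p\ne2$, and the Casimir acts invertibly on nontrivial simple summands in the lowest alcove, which gives a homotopy killing the non-invariant part.

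Finally, the algebra structure transfers because $\mathrm{SS}$ is monoidal and restricts to an isomorphism of invariant subalgebras, which is compatible with wedge product. Hence $H^\bullet_{\CE}(\g_{Q,\kk})\cong\wedge^\bullet(\eta_i)$. For characteristic zero, I would either quote the classical theorem or spread out over $\mathbb Z[1/N]$: the complex is defined over $\mathbb Z$, and the dimensions of cohomology in characteristic $0$ agree with those in characteristic $p$ for all large $p$ by universal coefficients. The algebra structure follows by choosing generators over $\mathbb Z_{(p)}$ whose reductions mod $p$ generate.
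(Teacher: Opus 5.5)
Your overall plan matches the paper: place $\wedge^n\g_{Q,\kk}^*$ in the bottom alcove, semisimplify in $\Ver_p(G_Q^{\rm ad})$, and compare with Theorem~\ref{thm:main-intro} in the stable range. But the step meant to give the upper bound fails. It is not true that the Casimir acts invertibly on every nontrivial bottom-alcove simple summand of $\wedge^\bullet\g_{Q,\kk}^*$ when $p>\dim\g_{Q,\kk}$.

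Here is a counterexample. Take $\mathfrak{sl}_5$, $p=29>24$, and $\lambda=2\rho-\theta=\omega_1+2\omega_2+2\omega_3+\omega_4$.
\begin{itemize}
\item $\lambda$ is in the bottom alcove, since $\langle\lambda+\rho,\theta^\vee\rangle=10<29$.
\item $L(\lambda)$ occurs in $\wedge^\bullet\mathfrak{sl}_5$. In characteristic $0$ this follows from Kostant's $\wedge\g\cong 2^r\,V_\rho\otimes V_\rho$ together with a Littlewood--Richardson tableau showing $c^{(7,6,4,2,1)}_{(4,3,2,1),(4,3,2,1)}\neq0$. It persists in characteristic $29$ because everything is in the bottom alcove.
\item The trace-form Casimir acts on $L(\lambda)$ by $(\lambda,\lambda+2\rho)=58\equiv0\pmod{29}$.
\end{itemize}
A single central element cannot separate all bottom-alcove weights from $0$, so your homotopy does not kill that isotypic part.

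What does work is the argument at the end of Section~\ref{sec:proof-main}, transported to $G=G_Q^{\rm ad}$:
\begin{itemize}
\item By the Cartan formula each $x\in\g_{Q,\kk}$ acts null-homotopically, so $\g_{Q,\kk}$ acts trivially on cohomology.
\item The complex consists of semisimple $G$-modules whose constituents $L(\lambda)$ have restricted weights. Hence each is simple over $\g_{Q,\kk}$, and nontrivial for $\lambda\neq0$.
\item Isotypic projections commute with $d$. So each nontrivial isotypic subcomplex has cohomology that is both $\g$-trivial and a sum of copies of $L(\lambda)$, hence zero.
\end{itemize}
The same projections also justify your unexplained lower bound $\dim H^n_{\CE}\geq\dim(\wedge^n\g^*)^G$.

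The paper packages this more efficiently. On the semisimple bottom-alcove subcategory, $\mathrm{SS}$ is fully faithful, hence exact. So, contrary to your remark, it does commute with the cohomology of the Chevalley--Eilenberg complex. This gives $\mathrm{SS}(H^\bullet_{\CE}(\g_{Q,\kk}))\cong H^\bullet_{\CE}(\g_{Q,p})$ as algebras. Full faithfulness then pulls back the exterior algebra (a sum of units) directly, with no need for separate lower and upper bounds.

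Second, your alcove estimate is wrong as written. The inequality $\langle2\rho,\alpha_0^\vee\rangle=2(h_Q-1)<p$ does not put the constituents in the lowest alcove. That requires $\langle\lambda+\rho,\alpha_0^\vee\rangle<p$, i.e.\ $3(h_Q-1)<p$ for $\lambda\leq2\rho$. This is still implied by $p>\dim\g_{Q,\kk}$, but it needs the check the paper makes: $\dim\g_{Q,\kk}\geq 3h_Q-3$ except for $G_2$, where $p>14$ forces $p\geq17$.
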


\begin{proof} First note that $\dim\g_{Q,\kk}\geq3h_Q-3$ in every simple type except $G_2$, where
$\dim\g_{Q,\kk}=14$ and $3h_Q-3=15$.  But a prime $p>14$ satisfies $p\geq17$, so in all cases 
$p>3h_Q-3$. 

Let $G=G_Q^{\rm ad}$, $\mathfrak a=\g_{Q,\kk}$, and
$\overline{\mathfrak a}=\g_{Q,p}$.  The highest weight of the adjoint
module $\mathfrak a$ is the highest root. Since $p>3h_Q-3$, this weight lies
in the bottom alcove, so $\mathfrak a$ and $\mathfrak a^*$ are simple tilting
modules.  Moreover, since $p>\dim\g_{Q,\kk}$, for every $0\leq n\leq \dim\g_{Q,\kk}$ the modules
$\wedge^n\mathfrak a$ and $\wedge^n\mathfrak a^*$ are images of
antisymmetrizing idempotents in their tensor powers, and hence are tilting.

Every weight of $\wedge^\bullet\mathfrak a$ is a sum of distinct roots and
zero weights.  If a dominant weight $\lambda$ occurs, then
$2\rho-\lambda$ is a sum of positive roots, so $\lambda\leq2\rho$.  Let
$\theta_s$ be the highest short root, so that $\theta_s^\vee$ is the highest
coroot and
$\langle\rho,\theta_s^\vee\rangle=h_Q-1$.  Thus
\[
 \langle\lambda+\rho,\theta_s^\vee\rangle
 \leq3h_Q-3<p.
\]
It follows that every indecomposable tilting summand of
$\wedge^n\mathfrak a$ and $\wedge^n\mathfrak a^*$ lies in the bottom alcove;
hence it is simple and non-negligible.  See
\cite[Chapter~II, Sections~5--6]{Jan03}.

The full additive subcategory generated by these simple bottom-alcove tilting
modules is semisimple, and the semisimplification functor is fully faithful
on it.  Every term and differential of the Chevalley--Eilenberg complex lies
in this subcategory.  Its kernels, images, and cohomology objects are again
direct sums of the same simple modules; hence semisimplification commutes
with the cohomology of this complex.  Moreover, for every $n\leq \dim\g_{Q,\kk}$,
\[
 \mathrm{SS}(\wedge^n\mathfrak a^*)
 \cong\wedge^n\overline{\mathfrak a}^{\,*},
\]
and these identifications carry the ordinary Chevalley--Eilenberg
differential and multiplication to those of
$C^\bullet_{\CE}(\overline{\mathfrak a})$.  Hence there is an isomorphism
of algebra objects
\[
 \mathrm{SS}\bigl(H^\bullet_{\CE}(\mathfrak a,\kk)\bigr)
 \cong H^\bullet_{\CE}(\overline{\mathfrak a}).
\]
The inequality $p>\dim\g_{Q,\kk}$ also implies $p>2h_Q$.  Thus \eqref{eq:stable-verlinde} and
Theorem~\ref{thm:main-intro} identify the right-hand side with the exterior
algebra on the $\xi_i$.  Its $G_{Q,p}$-action is trivial by the Cartan homotopy
and the height-one property.  Thus it is a direct sum of copies of the unit in
$\Ver_p(G_Q^{\rm ad})$.  Full faithfulness on the bottom-alcove subcategory
then implies that $H^\bullet_{\CE}(\mathfrak a,\kk)$ is the corresponding
direct sum of trivial $G$-modules and that its multiplication is the exterior
multiplication.  This proves the result in characteristic $p$.
The characteristic zero case follows from the positive-characteristic result
for large primes by the standard spreading-out argument. 
\end{proof}

In characteristic zero, Corollary 
\ref{cor:ordinary-naive-bound} recovers the classical theorem of Borel and Chevalley.
In characteristic $p$, the Corollary holds in the larger range $p>3h_Q-3$ due to
Friedlander--Parshall \cite[Theorem~1.2]{FP86}. The
Frobenius-kernel, or restricted-cohomology, calculation used as an input in
their proof is due to Andersen--Jantzen
\cite[Corollary~3.7]{AJ84}; it does not by itself compute ordinary
Chevalley--Eilenberg cohomology.  Friedlander--Parshall combine this input
with the comparison spectral sequence between ordinary and restricted
cohomology:
\[
 E^2_{2i,j}=(S^i\g_{Q,\kk}^*)^{(1)}
 \otimes H^j_{\CE}(\g_{Q,\kk},\kk)
 \Longrightarrow H^{2i+j}_{\mathrm{res}}(\g_{Q,\kk},\kk),
\]
where $H^\bullet_{\mathrm{res}}$ denotes restricted Lie algebra cohomology; see \cite[p.~1079]{FP88} and \cite[p.~114]{Far91}.
In the range $p>3h_Q-3$, the Andersen--Jantzen calculation, in the form used
by Friedlander--Parshall, identifies the abutment as
\[
 H^{2\bullet}_{\mathrm{res}}(\g_{Q,\kk},\kk)^{(-1)}
 \cong\kk[\mathcal N],
 \qquad H^{2\bullet+1}_{\mathrm{res}}(\g_{Q,\kk},\kk)=0,
\]
where $(-1)$ denotes inverse Frobenius twist and $\mathcal N$ is the nilpotent cone of $\g_{Q,\kk}$.
Friedlander--Parshall identify the edge map with the quotient
\[
 S(\g_{Q,\kk}^*)\longrightarrow
 \kk[\mathcal N]
 \cong S(\g_{Q,\kk}^*)/(f_1,\dots,f_r),
 \qquad \deg f_i=m_i+1.
\]
Here $f_1,\dots,f_r$ form a regular sequence.  They construct classes
$\eta_i\in H^{2m_i+1}_{\CE}(\g_{Q,\kk},\kk)$ whose successive
transgressions are
\[
 d_{2m_i+2}\eta_i=f_i^{(1)}
\]
modulo the preceding $f_j^{(1)}$.  These are precisely the differentials in
the Koszul complex of the regular sequence
$f_1^{(1)},\dots,f_r^{(1)}$.  The proof of
\cite[Theorem~1.2]{FP86} shows that these differentials exhaust the spectral
sequence; hence there are no additional classes or hidden multiplicative
extensions, and the ordinary cohomology is the displayed exterior algebra.

\end{document}